\documentclass[amsfonts, a4paper,11pt]{amsart}

\usepackage{
  amsmath
 ,amssymb
 ,amsfonts
 ,faktor
 ,stmaryrd
 ,tikz-cd
 ,bbold
 ,enumerate}

\newenvironment{enumroman}{\begin{enumerate}[\upshape (i)]}{\end{enumerate}}

\usepackage[all,cmtip]{xy}

\theoremstyle{plain}
	\newtheorem{thm}{Theorem}[subsection] 
	\newtheorem{cor}[thm]{Corollary}
	\newtheorem{prop}[thm]{Proposition}
	\newtheorem{lem}[thm]{Lemma}

\theoremstyle{definition}
	\newtheorem{defn}[thm]{Definition}
	\newtheorem{ex}[thm]{Example}
	\newtheorem{rmk}[thm]{Remark}


\def\class#1{\mathcal{#1}}
	\def\A{\class{A}}
	\def\B{\class{B}}
	\def\C{\class{C}}
	
	\def\E{\class{E}}
	\def\K{\class{K}}

	\def\V{\class{V}}
	
\def\2{\mathbb{2}}

\renewcommand{\textbf}[1]{\text{\fontseries{b}\selectfont{\upshape #1}}}
\def\cate#1{\textbf{#1}}
	\def\Cat{\cate{Cat}}
	\def\Set{\cate{Set}}

	\def\Mod{\cate{Mod}}
	\def\Ring{\cate{Ring}}
	\def\Fib{\cate{Fib}}
	\def\DFib{\cate{DFib}}
	\def\DCof{\cate{DCof}}
	\def\Span{\cate{Span}}
	\def\Opspan{\cate{Opspan}}

\def\a{\alpha}

\DeclareMathOperator{\ob}{Ob}

\DeclareMathOperator{\cod}{cod}

\def\op{\mathrm{op}}
\def\co{\text{co}}

\def\id{\text{id}}

\def\ra{\rightarrow}
\def\la{\leftarrow}

\def\sr{\stackrel}
\def\Ra{\Rightarrow}

\usepackage{tikz}
\usetikzlibrary{fit,shapes}

\pgfdeclarelayer{bg}    
\pgfsetlayers{bg,main}  

\newcommand{\eadd}[1]{#1}

\newcommand{\comma}[2]{(#1/#2)}

\usepackage{hyperref}

\hypersetup{%
  pdftoolbar=   true,
  pdfmenubar=   true,
  pdffitwindow= true,
  pdftitle=     {Categorical notions of fibration},
  pdfauthor=    {Fosco Loregian \& Emily Riehl},
  colorlinks=   true,
  linkcolor=    black,
  citecolor=    red!70!black}

\hyphenation{
ac-ces-si-ble
ad-joint
ad-junc-tion
be-tween
bul-le-tin
cat-e-go-ry
de-pend
de-vel-oped
dis-crete
es-sen-tial-ly
fi-bra-tion
fur-ther-more
in-ter-est
nec-es-sar-i-ly
prop-er-ty
some-times
sub-cat-e-go-ry
sys-tem
the-o-ry
uni-ver-si-ty
de-fined
el-e-ments
fi-bra-tions
orig-i-nals
prop-er-ties
to-pol-o-gists
two-sid-ed
co-car-te-sian
co-fi-bra-tions
mor-phism
pseu-do-na-tu-ral
qua-si-ca-te-go-ries
}

\begin{document}

\title{Categorical notions of fibration}

\author{Fosco Loregian}
\address{
{\sf Fosco Loregian} : 
Max Planck Institute for Mathematics, 
Vivatsgasse 7, 53111 Bonn --- Germany, 
\href{mailto:flore@mpim-bonn.mpg.de}{\sf flore@mpim-bonn.mpg.de}
}

\author{Emily Riehl}
\address{
{\sf Emily Riehl} : 
Department of Mathematics \\
Johns Hopkins University\\
3400 N Charles Street\\
Baltimore, MD 21218}
\email{\href{mailto:eriehl@math.jhu.edu}{\sf eriehl@math.jhu.edu}}

\date{Original version December 20, 2010; revised version \today}

\begin{abstract} 
\eadd{Fibrations over a category $B$, introduced to category theory by Grothendieck, encode pseudo-functors $B^\op \rightsquigarrow \Cat$, while the special case of discrete fibrations encode presheaves $B^\op \to \Set$. A two-sided discrete variation encodes functors $B^\op \times A \ra \Set$, which are also known as profunctors from $A$ to $B$. By work of Street, all of these fibration notions can be defined internally to an arbitrary 2-category or bicategory. While the two-sided discrete fibrations model profunctors internally to $\Cat$, unexpectedly, the dual two-sided codiscrete cofibrations are necessary to model $\V$-profunctors internally to $\V$-$\Cat$.} These notes were initially written by the second-named author to accompany a talk given in the Algebraic Topology and Category Theory Proseminar in the fall of 2010 at the University of Chicago. A few years later, the  first-named author joined to expand and improve the internal exposition and external references.
\end{abstract}

\maketitle
\setcounter{tocdepth}{1}
\tableofcontents 
\section{Introduction}

Fibrations were introduced to category theory in \cite{grothendieckcategoriesfibrees,grothendieckdescenteI} and developed in \cite{grayfibred}. Ross Street gave definitions of fibrations internal to an arbitrary 2-category \cite{streetfibrationsyoneda} and later bicategory \cite{streetfibrationsbicategories}. Interpreted in the 2-category of categories, the 2-categorical definitions agree with the classical ones, while the bicategorical definitions are more general.

In this expository article, we tour the various categorical notions of fibration in order of increasing complexity. We begin in Section \ref{1catsec} with the classical definitions of fibrations and discrete fibrations in ordinary 1-category theory. The internalization in a 2-category and generalization in a bicategory are given in Sections \ref{2catsec} and \ref{bicatsec}. The real goal, which we pursue in parallel, is to define two-sided discrete fibrations. In $\Cat$, two-sided discrete fibrations encode functors $B^\op \times A \ra \Set$ also known as \emph{profunctors} from $A$ to $B$, while in $\V$-$\Cat$ the dual two-sided codiscrete cofibrations encode $\V$-profunctors $\B^\op \otimes \A \ra \V$. We conclude with a construction of a bicategory, defined internally to $\V$-$\Cat$, whose 1-cells are two-sided codiscrete cofibrations. This bicategory can be used to ``equip'' the 2-category of $\V$-categories with a bicategory of $\V$-profunctors, providing a fertile setting for the formal category theory of enriched categories.

This theory has been extended to $(\infty,1)$-categories modeled as quasi-categories by Andr\'e Joyal and Jacob Lurie. In that context, the equivalence between fibrations and pseudofunctors is implemented by the \emph{straightening} and \emph{unstraightening} constructions of Lurie, plays a particularly important role. Because the foundational $(\infty,1)$-category theory required to present these homotopical variants of categorical fibrations is quite extensive, we regretfully do not include this material here and refer the reader instead to \cite{lurietopos} and \cite{riehlverityelements}.

In our attempt to cover a lot of material as expediently as possible, we give only a few sketched proofs but take care to provide thorough citations. There are many categorical prerequisites, particularly in the later sections, but we believe they are strictly easier than the topics below that take advantage of them.

\subsection*{Comma categories}
One categorical prerequisite is so important to merit a brief review. Given a pair of functors $B \sr{f}{\ra} C \sr{g}{\la} A$, an \emph{opspan} in $\Cat$, the \emph{comma category} $\comma{f}{g}$ has triples $(b \in B, fb \ra ga, a \in A)$ as objects and morphisms $(b,fb \ra ga, a) \ra (b', fb' \ra ga', a')$ given by a pair of arrows $a \ra a' \in A$, $b \ra b' \in B$ such that the evident square commutes in $C$. This category is equipped with canonical projections to $A$ and $B$ as well as a 2-cell
\[
\xymatrix@=10pt{ & \comma{f}{g} \ar[dl]_c \ar[dr]^d \ar@{}[dd]|{\Leftarrow} \\ A \ar[dr]_g & & B \ar[dl]^f \\ & C}
\]
and is universal among such data. 

Equivalently, let us denote $\Lambda^2_2$ the \emph{generic opspan}, i.e.~the category $\{0\ra 2\la 1\}$. Then $\comma{f}{g}$ is the limit of $\Lambda^2_2\sr{F}{\mapsto} \{B \sr{f}{\ra} C \sr{g}{\la} A\}$ weighted by $\Lambda^2_2 \sr{W}{\mapsto} \{\mathbb{1} \sr{d}{\ra} \2 \sr{c}{\la} \mathbb{1}\}$, the inclusions of the terminal category as the domain and codomain of the walking arrow. Equivalently --- as $\Cat$-weighted limits can be written as ends, in this case the end $\int_{i:\Lambda^2_2} \{Wi,Fi\}$ --- $\comma{f}{g}$ is the equalizer
\[
\xymatrix{
\comma{f}{g}\ar[r] & \displaystyle \prod_{i : \Lambda^2_2} \{Wi, Fi\} \ar@<4pt>[r]\ar@<-4pt>[r] & \displaystyle \prod_{(i\to j) : \Lambda^2_2} \{Wi, Fj\}
}
\]
(see \cite{kelly1989elementary} for the notation). Unwinding the definitions, this is precisely the limit of the diagram of categories
\[
\xymatrix@=10pt{
	& A \ar[dr]^g\ar@{=}[dl]&& C^{\2} \ar[dr]^d\ar[dl]_c&& B\ar@{=}[dr]\ar[dl]_f \\
	A && C && C && B.
}
\]
Often, we are interested in comma categories in which either $f$ or $g$ is an identity (in which case it is denoted by the name of the category) or in which either $A$ or $B$ is terminal (in which case the functor is denoted by the object it identifies). Such categories are sometimes called \emph{slice categories}.

\subsection*{A few notes on terminology} What we call \emph{fibrations} in $\Cat$ are sometimes called \emph{categorical}, \emph{Grothendieck}, \emph{cartesian}, or \emph{right} (and unfortunately also \emph{left}) \emph{fibrations}. The left-handed version, now \emph{opfibrations}, was originally called \emph{cofibrations}, though this name was rejected to avoid confusing topologists. Somewhat unfortunately, as we shall see below, once fibrations have been defined internally to a 2-category $\K$, the \emph{opfibrations} are precisely the fibrations in $\K^\co$ (formed by reversing the 2-cells only), while the \emph{cofibrations} are precisely the fibrations in $\K^\op$ (formed by reversing the 1-cells only). 

\subsection*{Acknowledgments} In addition to the cited sources, the second author learned about this material from conversations with Dominic Verity and Mike Shulman, and also from Urs Schreiber and the $n$Lab, a wiki devoted to category theory and higher category theory. She was supported by the NSF Graduate Research Fellowship Program when these notes were first prepared and by an NSF CAREER grant DMS-1652600 during the revision.

The first author thanks the second author for having permitted him to join the project and for having let him tinker with the {\TeX} source more than usual. 
\section{Fibrations in 1-category theory}\label{1catsec}

Loosely, a \emph{fibration} is a functor $p\colon E \ra B$ such that the fibers $E_b$ depend contravariantly and pseudo-functorially on the objects $b \in B$. Many categories are naturally \emph{fibered} in this way. 

\subsection{Discrete fibrations} We start with an easier variant  where the fibers are discrete categories. 

\begin{defn} A functor $p \colon E \ra B$ is a \emph{discrete fibration} if for each object $e \in E$ and arrow $f\colon b' \ra pe \in B$, there exists a unique lift $g\colon e' \ra e$.
\end{defn}

\eadd{In particular, for each $b \in B$ and object $e \in E_b$ in the fiber, there can be at most one lift of $1_b$ with codomain $e$. This proves that the fibers $E_b$ of a discrete fibration are discrete categories.}

Let $\DFib(B)$ denote the category of discrete fibrations over $B$, defined to be a full subcategory of the comma category $\Cat_{/B}$. Two facts about discrete fibrations are particularly important, the first of which we explore now.

\begin{thm}\label{thm:discrete-equiv} There is an equivalence of categories \[\DFib(B) \simeq [B^{\op},\Set].\]
\end{thm}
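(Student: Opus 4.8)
The plan is to exhibit an equivalence by constructing functors in both directions and checking that the round-trips are naturally isomorphic to the identities. The comparison passes through the \emph{category of elements} construction. Given a presheaf $P\colon B^\op \to \Set$, let $\int P$ denote the category whose objects are pairs $(b,x)$ with $b \in B$ and $x \in P(b)$, and whose morphisms $(b,x) \to (b',x')$ are arrows $f\colon b \to b'$ in $B$ satisfying $P(f)(x') = x$, with projection $\pi\colon \int P \to B$ forgetting the second coordinate. First I would check that $\pi$ is a discrete fibration: given $(b',x')$ and an arrow $f\colon b \to b' = \pi(b',x')$, the only candidate lift is $f$ itself regarded as a morphism $(b, P(f)(x')) \to (b',x')$, since the domain object is forced by the constraint $P(f)(x') = x$. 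This assignment is visibly functorial in $P$ --- a natural transformation $\alpha\colon P \Rightarrow P'$ induces the functor over $B$ sending $(b,x) \mapsto (b,\alpha_b x)$ --- so it defines a functor $\int(-)\colon [B^\op,\Set] \to \DFib(B)$.

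In the other direction, to a discrete fibration $p\colon E \to B$ I would assign the presheaf $P_E$ whose value at $b$ is the (discrete) fiber $E_b$ and whose action on an arrow $f\colon b \to b'$ is the function $E_{b'} \to E_b$ sending $e'$ to the domain of the unique lift $g\colon e \to e'$ of $f$ guaranteed by the discrete fibration property. The conceptual heart of the argument is that \emph{functoriality of $P_E$ is forced by uniqueness of lifts}: identities lift to identities and a composite of lifts is again a lift of the corresponding composite, so uniqueness yields $P_E(1_b) = 1$ and $P_E(gf) = P_E(f)P_E(g)$. A functor $F\colon E \to E'$ over $B$ restricts to maps of fibers that assemble into a natural transformation $P_E \Rightarrow P_{E'}$, again by uniqueness, giving a functor $\DFib(B) \to [B^\op,\Set]$.

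It then remains to compare the two composites. Starting from $P$, the fiber of $\int P$ over $b$ is the set $\{(b,x) : x \in P(b)\}$, naturally isomorphic to $P(b)$, yielding $P_{\int P} \cong P$. Starting from $p\colon E \to B$, an object of $\int P_E$ is a pair $(b,e)$ with $e \in E_b$, corresponding bijectively to the object $e \in E$, while a morphism $(b,e) \to (b',e')$ is exactly a lift of an arrow $b \to b'$ terminating at $e'$ with domain $e$, i.e.~an arrow $e \to e'$ of $E$. I expect the main obstacle to be verifying cleanly that this last correspondence is a bijection on \emph{every} hom-set: this invokes the defining property of a discrete fibration in its sharpest form --- every morphism of $E$ is the unique lift of its image under $p$, so $E$ is reconstructed from its fibers together with $B$. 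Granting this, both round-trips are naturally isomorphic to the respective identities, establishing the claimed equivalence.
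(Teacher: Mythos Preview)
Your proof is correct and follows essentially the same approach as the paper: both construct the fiber-presheaf functor $E \mapsto (b \mapsto E_b)$ using uniqueness of lifts for functoriality, and both use the category of elements (the paper writes it as the comma category $(*/F)$, you write it as $\int P$) for the inverse. The only difference is that you carry out the verification that the round-trips are naturally isomorphic to the identities, while the paper explicitly leaves this to the reader.
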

\begin{proof} 
Given a discrete fibration $E \ra B$, define $B^{\op} \ra \Set$ by $b \mapsto E_b$, the category whose objects sit over $b \in B$ and whose arrows map to the identity at $b$. 
 For each morphism $f\colon b' \ra b$, define $f^* \colon E_b \ra E_{b'}$ by mapping $e \in E_b$ to the domain of the unique lift of $f$ with codomain $e$. Functoriality follows from uniqueness of lifts.

Conversely, given a functor $F\colon B^{\op} \ra \Set$, the canonical functor from its \emph{category of elements} $\comma{*}{F}$ to $B$ is a discrete fibration.  The category of elements construction is defined by applying the comma construction to opspans whose domain leg is fixed at $*\colon \mathbb{1} \to \Set$. In this way, the comma construction defines a functor $\comma{*}{-}\colon [B^\op, \Set] \to \Cat_{/B}$ whose value at $F \in [B^\op,\Set]$ is  a discrete fibration $\Sigma \colon \comma{*}{F}\to B$.

The comma category $\comma{*}{F}$, whose objects are elements of $Fb$ for some $b \in B$, is sometimes called the \emph{Grothendieck construction} on the presheaf $F$. We leave the verification that these functors define inverse equivalences to the reader.
\end{proof}

There is an ``internal'' rephrasing of the definition of a discrete fibration. Write $B_0$ for the set of objects and $B_1$ for the set of arrows of a small category $B$.

\begin{defn}\label{internaldefn} A functor $p : E \ra B$ between small categories is a \emph{discrete fibration} iff \[ \xymatrix{ E_1 \ar[d]_{p_1}  \ar@{}[dr]|(.2){\lrcorner}\ar[r]^{\cod} & E_0 \ar[d]^{p_0} \\ B_1 \ar[r]_{\cod} & B_0}\] is a pullback in $\Set$.
\end{defn}

 The second key property of discrete fibrations requires the following definition.

\begin{defn}
A functor $f \colon C \ra D$ is \emph{final} if any diagram of shape $D$ can be restricted along $f$ to a diagram of shape $C$ without changing its colimit. This is the case just when  for all $d \in D$, the comma category $\comma{d}{f}$ is non-empty and connected.
\end{defn}

\begin{prop}[\protect{\cite{MR0346027}}]\label{thm:dfib-ofs} There is an orthogonal factorization system $(\mathcal{E},\mathcal{M})$ on $\Cat$ with $\mathcal{E}$ the final functors and $\mathcal{M}$ the discrete fibrations.\footnote{This is usually called the \emph{comprehensive factorization system}. In \cite{lawvere1970equality} Lawvere defines the notion of a \emph{comprehension scheme} as a pseudofunctor $P \colon \mathcal E \to \text{Adj} $, sending a 1-cell $f \colon A\to B$ to an adjoint pair $Pf_! \colon PA\leftrightarrows PB \colon Pf^*$. Under suitable assumptions, such a $P$ defines a factorization system in $\mathcal E$ formed by classes of arrows called the $P$-\emph{connected} and $P$-\emph{covering} maps. When $\mathcal E = \Cat$ and $P$ is the presheaf construction $A\mapsto [A^\text{op}, \Set]$, a functor is $P$-connected if and only if it is final, and it is a $P$-covering if and only if it is a discrete fibration. See \cite{berger2017comprehensive} for a generalization to any `consistent' comprehension scheme.}
\end{prop}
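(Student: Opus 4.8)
The plan is to verify the two defining conditions of an orthogonal factorization system --- existence of $(\mathcal{E},\mathcal{M})$-factorizations and orthogonality $\mathcal{E}\perp\mathcal{M}$ --- and then to deduce formally that the final functors and the discrete fibrations are exactly the two halves of the system, i.e.\ that $\mathcal{E}={}^{\perp}\mathcal{M}$ and $\mathcal{M}=\mathcal{E}^{\perp}$.

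First I would construct the factorization. Given $f\colon C\to D$, form the presheaf $P_f\colon D^\op\to\Set$ with $P_f(d)=\pi_0\comma{d}{f}$, the set of connected components of the comma category, the action on a morphism $d'\to d$ being induced by precomposition $\comma{d}{f}\to\comma{d'}{f}$. By Theorem~\ref{thm:discrete-equiv} the category of elements yields a discrete fibration $M\colon\comma{*}{P_f}\to D$. The functor $f$ factors through $M$ via $e\colon C\to\comma{*}{P_f}$ sending $c$ to the pair $(fc,[\id_{fc}])$, where $[\id_{fc}]$ denotes the component of $(c,\id_{fc})\in\comma{fc}{f}$; one checks directly that $e$ is functorial and that $Me=f$.

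The technical heart is to show that $e$ is final, that is, that $\comma{z}{e}$ is nonempty and connected for every object $z=(d,s)$ of $\comma{*}{P_f}$ with $s\in\pi_0\comma{d}{f}$. Unwinding the description of morphisms into $e(c)$ in the discrete fibration $\comma{*}{P_f}$, an object of $\comma{z}{e}$ is a pair $(c,\phi\colon d\to fc)$ whose class in $\pi_0\comma{d}{f}$ equals $s$, and a morphism of $\comma{z}{e}$ is exactly a morphism of $\comma{d}{f}$. Hence $\comma{z}{e}$ is isomorphic to the full subcategory of $\comma{d}{f}$ spanned by the component $s$, which by construction is a single connected component and so is nonempty and connected. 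I expect this bookkeeping --- matching the morphisms of $\comma{z}{e}$ with those of $\comma{d}{f}$ through the definition of the discrete fibration $M$ --- to be the main obstacle.

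It remains to prove orthogonality: any final $e\colon A\to B$ is left orthogonal to any discrete fibration $m\colon E\to D$. Writing $m$ as the fibration $\comma{*}{F}\to D$ associated to $F\colon D^\op\to\Set$, I would use that for each $g'\colon A\to D$ the functors $A\to\comma{*}{F}$ lying over $g'$ correspond bijectively to compatible families of elements, namely to $\lim_{A^\op}(g'^{*}F)$. A lifting problem against $m$ with bottom leg $g\colon B\to D$ then amounts to asking that the restriction map $\lim_{B^\op}(g^{*}F)\to\lim_{A^\op}(e^{*}g^{*}F)$ be a bijection, and this holds precisely because $e$ final means $e^\op$ is initial and hence preserves all limits. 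This produces a unique diagonal filler, establishing $\mathcal{E}\perp\mathcal{M}$. Finally, with factorizations and orthogonality in hand, the standard retract argument upgrades the evident inclusions $\mathcal{E}\subseteq{}^{\perp}\mathcal{M}$ and $\mathcal{M}\subseteq\mathcal{E}^{\perp}$ to equalities: factoring a map left orthogonal to all discrete fibrations as $m\circ e$ and filling the resulting square two ways forces its $m$-part to be invertible, so the map is final, and dually for $\mathcal{M}$. Since both classes are evidently replete and closed under composition, this exhibits $(\mathcal{E},\mathcal{M})$ as an orthogonal factorization system.
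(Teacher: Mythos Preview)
The paper does not supply a proof of this proposition; it is stated with a citation to Street--Walters and amplified only by the footnote on comprehension schemes. Your argument is correct and is essentially the classical construction from that reference: factor $f$ through the discrete fibration classified by $d\mapsto\pi_0\comma{d}{f}$, identify each slice $\comma{z}{e}$ with a single connected component of $\comma{d}{f}$ to verify finality of the first factor, and establish orthogonality by translating lifts against a discrete fibration $\comma{*}{F}\to D$ into elements of the limit of the restricted presheaf, where finality gives the required bijection. The closing retract argument is the standard way to recover $\mathcal{E}={}^{\perp}\mathcal{M}$ and $\mathcal{M}=\mathcal{E}^{\perp}$ from factorizations plus orthogonality.

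One small point of phrasing: when you write ``$e^{\op}$ is initial and hence preserves all limits,'' what you actually use is that \emph{restriction along} $e^{\op}$ preserves limits, i.e., the canonical map $\lim_{B^{\op}}G\to\lim_{A^{\op}}(G\circ e^{\op})$ is a bijection for any $G\colon B^{\op}\to\Set$. The argument is sound; only the wording could be tightened.
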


The dual notion, functors $p \colon E \ra B$ which have unique lifts of morphisms with specified domain, are called \emph{discrete opfibrations}, which coincide with discrete fibrations $p \colon E^{\op} \ra B^{\op}$. These correspond  to functors $B \ra \Set$ and form an orthogonal factorization system with the class of \emph{initial} functors which are those such that restriction preserves limits.

\subsection{Fibrations} Now we're ready to introduce the main subject of our exposition.
\begin{defn}\label{defn:1-fib} Given a functor $p \colon E \ra B$, an arrow $g \colon e' \ra e$ in $E$ is $p$-\emph{cartesian} if for any $g'\colon e'' \ra e$ such that $pg' = pg \cdot h$ in $B$, there is a unique lift $k$ of $h$ such that $g' = g \cdot k$. A functor $p :E \ra B$ is a \emph{fibration} if each $f: b \ra pe$ in $B$ has a $p$-cartesian lift with codomain $e$. 
\end{defn}

Some sources differentiate between fibrations and those \emph{cloven} fibrations which come with chosen cartesian lifts which may satisfy additional properties. See \cite{grayfibred} or the $n$Lab.

\begin{thm}\label{catadjthm} A functor $p \colon E \ra B$ is a fibration if and only if either 
\begin{enumroman} 
\item For each $e \in E$, the functor $p\colon \comma{E}{e} \ra \comma{B}{p(e)}$ has a right adjoint right inverse. 
\item The canonical functor $E^\2 \ra \comma{B}{p}$ obtained applying $p$ has a right adjoint right inverse.
\end{enumroman}
\end{thm}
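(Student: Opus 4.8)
The plan is to reduce both (i) and (ii) to the defining lifting property of cartesian arrows in Definition \ref{defn:1-fib} by way of a single observation about right adjoint right inverses. First I would record the following general principle: for any functor $P \colon \mathcal X \ra \mathcal Y$, a right adjoint right inverse is the same data as a choice, for each $Y \in \mathcal Y$, of an object $RY \in \mathcal X$ with $P(RY) = Y$ satisfying the universal property that every morphism $\psi \colon PX \ra Y$ factors as $\psi = P\bar\psi$ for a \emph{unique} $\bar\psi \colon X \ra RY$. This is just the universal-arrow description of an adjunction specialized to the case of an identity counit: a right adjoint supplies for each $Y$ a universal arrow $\epsilon_Y \colon P(RY) \ra Y$, and asking that $R$ be a section with $\epsilon = \id$ forces $P(RY) = Y$ and pins the transpose of $\psi$ down by $P\bar\psi = \psi$. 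Functoriality of $R$ and naturality of the bijection then follow formally from the uniqueness clause, so I would not dwell on them.

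For (i) I would apply this with $P = p \colon \comma{E}{e} \ra \comma{B}{pe}$, which sends $\phi \colon e'' \ra e$ to $p\phi \colon pe'' \ra pe$. An object of the target is $f \colon b \ra pe$, and a section value $Rf$ with $p(Rf) = f$ is precisely an arrow $g \colon e' \ra e$ with $pe' = b$ and $pg = f$. Spelling out the universal property above --- a morphism $\psi \colon p\phi \ra f$ in $\comma{B}{pe}$ is an arrow $h \colon pe'' \ra b$ with $fh = p\phi$, and its unique lift is $k \colon e'' \ra e'$ with $gk = \phi$ and $pk = h$ --- one reads off exactly that $g$ is $p$-cartesian. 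Hence $p \colon \comma{E}{e} \ra \comma{B}{pe}$ has a right adjoint right inverse if and only if every $f \colon b \ra pe$ has a $p$-cartesian lift with codomain $e$, and ranging over all $e$ this is precisely the statement that $p$ is a fibration.

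For (ii) I would argue identically with $P = q \colon E^\2 \ra \comma{B}{p}$, the functor sending $\phi \colon e_0 \ra e_1$ to the triple $(pe_0, \, p\phi \colon pe_0 \ra pe_1, \, e_1)$. A section $s$ assigns to each $(b, \alpha \colon b \ra pe, e)$ an arrow $g \colon e'' \ra e$ with $pe'' = b$ and $pg = \alpha$, and the universal property becomes: for every $\phi_0 \colon e_0 \ra e_1$ and every pair $(u \colon pe_0 \ra b, \, v \colon e_1 \ra e)$ with $\alpha u = pv \cdot p\phi_0$, there is a unique $a \colon e_0 \ra e''$ with $pa = u$ and $ga = v\phi_0$. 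Since $v\phi_0 \colon e_0 \ra e$ satisfies $p(v\phi_0) = pv \cdot p\phi_0 = \alpha u = pg \cdot u$, this is exactly the defining lifting property of the $p$-cartesian arrow $g$ (applied to the arrow $v\phi_0$ into $e$ and the factoring map $u$), and conversely that property yields the required unique $a$. So once again $s$ exists if and only if every $\alpha$ has a $p$-cartesian lift, i.e.~iff $p$ is a fibration. I would also remark that $q$ sits over $E$ via the projections $E^\2 \ra E$ and $\comma{B}{p} \ra E$, and restricts on the fiber over $e$ to the functor of (i); thus (ii) merely packages the pointwise adjunctions of (i) into a single one, and the equivalence of the two conditions also drops out of the fact that each is equivalent to $p$ being a fibration.

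The main obstacle I anticipate is bookkeeping rather than conceptual. One must be careful that the \emph{right inverse} requirement genuinely forces the counit to be the identity, so that the adjunction transpose is characterized by $P\bar\psi = \psi$ with no intervening isomorphism; and one must correctly translate morphisms in the two comma categories --- keeping straight which leg is sent into $B$ by $p$ and which stays in $E$ --- so that the universal property of the section lines up on the nose with the cartesian condition of Definition \ref{defn:1-fib}.
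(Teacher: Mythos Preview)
Your argument is correct and is precisely the elaboration of the one-line hint the paper gives before deferring to \cite[Prop~3.11]{grayfibred}: the right adjoint right inverse picks out $p$-cartesian lifts. Your general principle --- that a right adjoint with identity counit amounts to a terminal lift of each object of the base --- and its instantiation in the two comma categories is exactly how the standard argument proceeds.
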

\begin{proof} In each case, the right adjoint picks out $p$-cartesian lifts for each morphism. See \cite[Prop 3.11]{grayfibred}.
\end{proof}

Let $\Fib(B)$ denote the sub 2-category of $\Cat_{/B}$ of fibrations; \emph{cartesian functors}, those functors that preserve cartesian arrows; and all 2-cells.

\begin{thm} There is a 2-equivalence of 2-categories \[\Fib(B) \simeq {\bf Psd}[B^{\op},\Cat],\] where the latter is the 2-category of pseudo-functors $B^\op \rightsquigarrow \Cat$, pseudo-natural transformations, and modifications.
\end{thm}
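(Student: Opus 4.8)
The plan is to upgrade the proof of Theorem~\ref{thm:discrete-equiv} from the discrete setting to the full 2-categorical one, the essential new feature being that fiberwise reindexing is only \emph{pseudo}-functorial. First I would construct the \emph{straightening} assignment $\Fib(B) \ra {\bf Psd}[B^\op, \Cat]$. Given a fibration $p\colon E \ra B$, send each $b \in B$ to its fiber $E_b$, just as before. To define the action on a morphism, first fix a \emph{cleavage}: for each $f\colon b' \ra b$ and each $e \in E_b$ a chosen $p$-cartesian lift with codomain $e$, whose domain we call $f^*e$; such a cleavage is furnished canonically by the right adjoint right inverse of Theorem~\ref{catadjthm}, whose counit components are the chosen cartesian arrows. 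The universal property of Definition~\ref{defn:1-fib} promotes $e \mapsto f^*e$ to a functor $f^*\colon E_b \ra E_{b'}$ and, crucially, supplies canonical comparison isomorphisms $(gf)^* \cong f^* g^*$ and $(1_b)^* \cong \id$. These fail to be identities precisely because cartesian lifts are unique only up to unique isomorphism, and that same uniqueness forces them to satisfy the pentagon and unit coherence axioms, so $b \mapsto E_b$ assembles into a genuine pseudo-functor $B^\op \rightsquigarrow \Cat$.

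Conversely I would describe the \emph{Grothendieck construction} $\int_B F \ra B$ attached to a pseudo-functor $F\colon B^\op \rightsquigarrow \Cat$, generalizing the category of elements $\comma{*}{F}$ of the discrete case. Its objects are pairs $(b,x)$ with $b \in B$ and $x \in Fb$; a morphism $(b,x) \ra (b',x')$ is a pair $(u\colon b \ra b', \phi\colon x \ra Fu(x'))$, where $Fu\colon Fb' \ra Fb$, with composition defined using the compositor isomorphisms of $F$ --- this is exactly where pseudo-functoriality is consumed. The projection $(b,x) \mapsto b$ is a fibration whose $p$-cartesian morphisms are those $(u,\phi)$ with $\phi$ invertible and whose fiber over $b$ is isomorphic to $Fb$. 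I would then check that the two constructions are mutually inverse up to equivalence: reading off the fibers of $\int_B F$ recovers $F$ up to pseudo-natural equivalence, while $\int_B (b \mapsto E_b) \simeq E$ over $B$ since every object of $E$ lies over a unique object of $B$.

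To obtain a full 2-equivalence rather than an equivalence of underlying categories, I would extend both assignments to 1-cells and 2-cells. A cartesian functor $E \ra E'$ over $B$ restricts to a functor on each fiber $E_b \ra E'_b$; because it preserves cartesian arrows only up to the comparison isomorphisms above, these fiber components assemble into a pseudo-natural transformation, with naturality squares filled by precisely those isomorphisms. A 2-cell between cartesian functors, whose components necessarily lie in fibers, becomes a modification. Verifying that these assignments respect composition and identities, and that the round-trip equivalences of the previous paragraph are themselves pseudo-natural, then exhibits the desired 2-equivalence; for the classical details see \cite{grothendieckcategoriesfibrees, grayfibred}.

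The main obstacle is bookkeeping rather than any single deep idea: every step rests on the uniqueness clause in the universal property of cartesian arrows, and the real labor lies in checking that the comparison isomorphisms are coherent as pseudo-functor structure, that a cartesian functor's fiber components are genuinely pseudo-natural, and that both round trips are compatibly equivalent to the identity. The unavoidable non-strictness --- the fact that straightening depends on a chosen cleavage and so is defined only up to equivalence --- is exactly what blocks strengthening the statement to a 2-isomorphism, and organizing this coherence data carefully is the principal technical burden.
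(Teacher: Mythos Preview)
Your proposal is correct and follows the same approach as the paper's (very terse) proof: straighten a fibration to a pseudo-functor via fibers and chosen cartesian lifts, and invert via the Grothendieck construction. The only difference in framing is that the paper identifies the Grothendieck construction as the \emph{lax colimit} of the pseudo-functor $B^\op \rightsquigarrow \Cat$, whereas you give the explicit pairs-and-arrows description; these are the same object, and your presentation supplies the detail --- the extension to cartesian functors and 2-cells, and the coherence bookkeeping --- that the paper leaves to its references.
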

\begin{proof} 
A fibration $E \ra B$ gives rise to a pseudo-functor $b \mapsto E_b$, $f \colon b' \ra b \mapsto f^*\colon E_b \ra E_b'$. By the universal property of the cartesian lifts, this assignment is functorial up to natural isomorphism. In the other direction, the lax colimit of a pseudo-functor $B^{\op} \rightsquigarrow \Cat$ is canonically a fibration over $B$. The fibration so-produced is frequently called the \emph{Grothendieck construction} of the associated pseudo-functor; see \cite[A1.1.7, B1.3.1]{MR1953060} for the discrete case, when the pseudo-functor takes values in $\Set$, and \cite[pp. 1-104]{MR2222646} for applications to algebraic geometry, which provided the historical source of this definition. 
\end{proof}

\begin{rmk}[Fibrations and indexed categories]
Let $\mathcal S$ be a category with finite limits, and in particular pullbacks. An \emph{indexed category} consists of a pseudofunctor $\underline{A} \colon \mathcal S^\text{op} \to \Cat$ such that each $\underline{A}^I := \underline{A}(I)$ has a distinguished class of isomorphisms called \emph{canonical}. The coherence conditions for $\underline{A}$ to be a pseudofunctor are specified in terms of these canonical isomorphisms, and an entire theory of indexed categories can be developed with respect to the `indexing base' $\mathcal S$ (there is a 2-category $\mathcal S\text{-Ind}$ of such structures, there is a notion of indexed adjunction, etc.\dots).

The theories of indexed categories and of fibrations over $\Cat$ are `essentially equivalent' in the sense that the Grothendieck construction allows to pass back and forth between the two notions.

See \cite{johnstone1978indexed} for a more detailed account of the theory of indexed categories, and in particular \cite{pare1978abstract} for an introduction on the subject; the notion of indexed category is almost always referred to the case where the indexing base is an elementary topos.
\end{rmk}

Fibrations enjoy similar stability properties to their topological analogs.

\begin{thm}\label{catstabthm} Fibrations are closed under composition and pullback along arbitrary functors.
\end{thm}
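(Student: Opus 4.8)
The plan is to verify the cartesian lifting property of Definition \ref{defn:1-fib} directly in both cases, the engine in each being the uniqueness clause that pins down cartesian lifts. For closure under composition, let $p \colon E \ra B$ and $q \colon B \ra C$ be fibrations and fix $e \in E$ together with a morphism $f \colon c \ra qpe$ in $C$. I would first choose a $q$-cartesian lift $\bar f \colon b \ra pe$ of $f$, and then a $p$-cartesian lift $\tilde f \colon e' \ra e$ of $\bar f$; since $qp\tilde f = q \bar f = f$, this $\tilde f$ is a lift of $f$, and the crux is to show it is $qp$-cartesian. Given any $g' \colon e'' \ra e$ with $qp(g') = f \cdot h$, I would first apply $q$-cartesianness of $\bar f$ to factor $p(g')$ as $\bar f \cdot \ell$ with $q\ell = h$, and then apply $p$-cartesianness of $\tilde f$ to factor $g'$ as $\tilde f \cdot k$ with $pk = \ell$; then $qp(k) = h$ exhibits $k$ as the desired lift, and the two uniqueness statements combine to show $k$ is unique.

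For closure under pullback, let $p \colon E \ra B$ be a fibration and $k \colon A \ra B$ an arbitrary functor, and form the pullback
\[
\xymatrix{
P \ar[r]^{\pi_E} \ar[d]_{\pi_A} & E \ar[d]^p \\
A \ar[r]_k & B
}
\]
in $\Cat$. Since limits in $\Cat$ are computed objectwise, an object of $P$ is a pair $(a,e)$ with $ka = pe$, and a morphism is a pair of morphisms with matching images in $B$. Given $(a,e) \in P$ and $f \colon a' \ra a$ in $A$, I would take the $p$-cartesian lift $g \colon e' \ra e$ of $kf \colon ka' \ra pe$ and form the morphism $(f,g) \colon (a',e') \ra (a,e)$, which is well defined since $pg = kf$ forces $pe' = ka'$. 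The key step is to verify that $(f,g)$ is $\pi_A$-cartesian: a competing morphism $(f',g')$ into $(a,e)$ with $f' = f \cdot h$ satisfies $pg' = pg \cdot kh$, so $p$-cartesianness of $g$ supplies a unique $m$ with $g' = g \cdot m$ and $pm = kh$; this last equation is exactly what makes $(h,m)$ a morphism of $P$, and it is the unique factorization of $(f',g')$ through $(f,g)$.

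I expect the main obstacle to be bookkeeping rather than conceptual content: in the composition case one must check that the factorizations produced at the two levels are mutually compatible and that their separate uniqueness assertions assemble into a single uniqueness statement for $k$, while in the pullback case one must confirm that a cartesian lift in $P$ is genuinely determined by its two projections. Both points follow from the objectwise description of limits in $\Cat$ together with the uniqueness clause in the definition of a cartesian arrow, so no additional machinery is required. Alternatively, one could run both arguments through the adjoint characterization of Theorem \ref{catadjthm}, but the direct verification seems the most transparent.
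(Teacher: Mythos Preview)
Your argument is correct in both parts: the two-stage lifting for the composite, with uniqueness assembled from the $q$- and $p$-cartesian uniqueness clauses in that order, and the componentwise construction of cartesian lifts in the pullback using the explicit description of $P$ in $\Cat$, are exactly the standard verifications and go through as you describe.

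The paper itself does not prove this theorem at all; its entire proof is the citation ``See \cite[3.1]{grayfibred}.'' So your proposal is not so much a different route as the route that the paper declines to spell out. What you gain is a self-contained argument that stays at the level of Definition~\ref{defn:1-fib} and requires nothing beyond the elementary description of pullbacks in $\Cat$; what the paper gains by citing Gray is brevity and a pointer to a source that treats many such stability properties uniformly. Your closing remark that one could alternatively route the proof through the adjoint characterization of Theorem~\ref{catadjthm} is also apt, and is closer in spirit to how these results are often packaged in the literature, but your direct verification is the cleaner choice here.
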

\begin{proof} See \cite[3.1]{grayfibred}.
\end{proof}

\begin{rmk} 
There is no analog of Proposition \ref{thm:dfib-ofs} whose right class is the class of fibrations for the rather pedestrian reason that the fibrations introduced in Definition \ref{defn:1-fib} are not closed under retracts. There is however an \emph{algebraic} weak factorization system whose \emph{algebraic} right maps are cloven fibrations. See \cite[Example 29(ii)]{BourkeGarner:AWFS1}.
\end{rmk}

Before giving examples, we mention the dual notion. 
\begin{defn}
 A functor $p\colon E \ra B$ is an \emph{opfibration} if $p \colon E^{\op} \ra B^{\op}$ is a fibration. A functor $p \colon E \ra B$ that is both a fibration and an opfibration is called a \emph{bifibration}. 
\end{defn}
The proof of the following lemma is left as an exercise.
\begin{lem}\label{lem:bifib-adjoint}
 A fibration $p \colon E \ra B$ is also an opfibration if and only if each functor $f^* \colon E_b \ra E_{b'}$ has a left adjoint $f_!$.
\end{lem}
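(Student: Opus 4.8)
The plan is to reduce everything to the universal properties of cartesian and cocartesian lifts, packaged as a single natural bijection. For a morphism $f\colon b' \ra b$ in $B$, write $\mathrm{Hom}_f(e',e)$ for the set of arrows $e' \ra e$ in $E$ lying over $f$, where $e' \in E_{b'}$ and $e \in E_b$; pre- and post-composition with vertical arrows makes this a functor $E_{b'}^\op \times E_b \ra \Set$, contravariant in $e'$ and covariant in $e$. Since $p$ is a fibration, the cartesian lift $\chi\colon f^*e \ra e$ over $f$ has the defining property that every $g\colon e' \ra e$ over $f$ factors uniquely as $g = \chi \cdot k$ through a vertical $k\colon e' \ra f^*e$ — this is exactly the case $h = \id_{b'}$ of Definition \ref{defn:1-fib}. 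Hence there is a bijection, natural in $e'$ and $e$,
\[ \mathrm{Hom}_f(e',e) \cong E_{b'}(e', f^*e). \]
Dually, whenever a cocartesian lift $\kappa\colon e' \ra f_!e'$ over $f$ exists, one obtains $\mathrm{Hom}_f(e',e) \cong E_b(f_!e', e)$, again natural in both variables.

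For the forward implication, suppose $p$ is a bifibration, so that both $f^*$ and $f_!$ are defined. Combining the two displayed bijections produces a natural isomorphism $E_b(f_!e',e) \cong E_{b'}(e', f^*e)$, which is precisely an adjunction $f_! \dashv f^*$. Thus each $f^*$ has the asserted left adjoint, and this direction is a formal consequence of matching both adjoints against the common functor $\mathrm{Hom}_f$.

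For the converse, assume $p$ is a fibration and each $f^*$ admits a left adjoint $f_!$, with unit $\eta\colon e' \ra f^*f_!e'$. I would define the candidate lift $\kappa := \chi \cdot \eta \colon e' \ra f_!e'$, where $\chi\colon f^*f_!e' \ra f_!e'$ is the cartesian lift of $f$; note that $\kappa$ lies over $f$. Transposing across the adjunction and composing with cartesian lifts as in the first paragraph, one checks that postcomposition with $\kappa$ realizes the bijection $E_b(f_!e',x) \cong \mathrm{Hom}_f(e',x)$ for every $x \in E_b$. To upgrade this vertical-level statement to the full cocartesian property, take any $g\colon e' \ra e''$ with $p(g) = h \cdot f$ for some $h\colon b \ra p(e'')$. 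First factor $g$ through the cartesian lift $\chi_h \colon h^*e'' \ra e''$ of $h$, obtaining a unique $\bar g\colon e' \ra h^*e''$ over $f$; next use the bijection just established to replace $\bar g$ by the unique vertical $\hat g\colon f_!e' \ra h^*e''$ with $\bar g = \hat g \cdot \kappa$; finally set $k := \chi_h \cdot \hat g$, which lies over $h$ and satisfies $k \cdot \kappa = g$. Uniqueness of $k$ follows by running the two factorizations in reverse, invoking the uniqueness clauses for $\chi_h$ and for the adjunction bijection. This exhibits $\kappa$ as a $p$-cocartesian lift, so $p$ is an opfibration.

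The genuinely substantive point is this converse, and within it the step of promoting the vertical universal property of $\kappa$ (maps into objects of $E_b$) to the honest cocartesian property (maps into arbitrary $e''$ lying over composites $h \cdot f$). Everything hinges on the interplay between the adjunction and the cartesian lifts supplied by the ambient fibration, and the bookkeeping of the two successive factorizations — first through $\chi_h$, then through $\kappa$ — is where care is required.
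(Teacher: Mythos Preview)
The paper does not actually prove this lemma: immediately before the statement it reads ``The proof of the following lemma is left as an exercise.'' So there is no argument to compare against; your task was precisely to supply the exercise, and you have done so correctly.

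Your approach --- introducing the bifunctor $\mathrm{Hom}_f(-,-)$ and showing that cartesian lifts identify it with $E_{b'}(-,f^*-)$ while cocartesian lifts identify it with $E_b(f_!-,-)$ --- is the standard and cleanest way to organize the argument. The forward direction is then immediate, and for the converse your construction $\kappa = \chi\cdot\eta$ is the right candidate. The two-stage factorization (first through the cartesian lift $\chi_h$ of the ``excess'' morphism $h$, then through $\kappa$ using the adjunction) is exactly what is needed to promote the fiberwise universal property to the full cocartesian one, and your uniqueness argument by reversing these factorizations is sound. One tiny quibble: the map $v \mapsto v\cdot\kappa$ from $E_b(f_!e',x)$ to $\mathrm{Hom}_f(e',x)$ is \emph{pre}composition with $\kappa$, not postcomposition; but the mathematics is unambiguous.
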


Finally, some examples:

\begin{ex} \quad
\begin{enumroman} 
\item The codomain functor $C^{\2} \ra C$ is an opfibration that is a fibration iff $C$ has pullbacks. (Hence the name ``cartesian''.)
\item The domain functor $C^\2 \ra C$ is a fibration that is an opfibration iff $C$ has pushouts.
\item There is a functor $\Ring^\op \to \Cat$ which carries a ring $R$ to the category ${}_R\Mod$ of left $R$-modules and a ring homomorphism $f \colon R \to S$, to the restriction of scalars functor $f^* \colon {}_S\Mod \to {}_R\Mod$. As these functors admit left adjoints $f_!$, given by extension of scalars, by Lemma \ref{lem:bifib-adjoint} the Grothendieck construction produces a bifibration $\Mod \to \Ring$, where the objects of $\Mod$ are left modules over varying rings and a morphism over a ring homomorphism $f\colon R \to S$ can be understood as a homomorphism of $R$-modules, where the codomain $S$-module is restricted along $f$.
\item For any category $C$, the category of set-indexed families of objects of $C$ is a fibration over $\Set$ with the forgetful functor taking a family to its indexing set. The functors $f^*$ are given by reindexing and have left adjoints iff $C$ has small coproducts, and right adjoints iff $C$ has small products.\end{enumroman}
\end{ex}

We mention one final result which will motivate the definitions in Section \ref{2catsec}.

\begin{thm} A functor $p \colon E \ra B$ is a fibration if and only if the functor $[X,p] \colon [X,E] \ra [X,B]$ is a fibration for every category $X$.
\end{thm}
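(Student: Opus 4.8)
The plan is to leverage the adjoint characterization of fibrations in Theorem~\ref{catadjthm}(ii) together with the fact that $[X,-]\colon \Cat \to \Cat$ is a $2$-functor. The reverse implication is immediate: if $[X,p]$ is a fibration for every $X$, then specializing to the terminal category $X = \mathbb{1}$ and using the canonical isomorphisms $[\mathbb{1}, E] \cong E$ and $[\mathbb{1}, B] \cong B$, under which $[\mathbb{1}, p]$ is identified with $p$, we conclude that $p$ itself is a fibration. So the content lies in the forward implication.

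For the forward implication I would argue as follows. By Theorem~\ref{catadjthm}(ii), $p$ is a fibration precisely when the canonical comparison functor $i \colon E^\2 \to \comma{B}{p}$ obtained by applying $p$ has a right adjoint right inverse $r$. Now $[X,-]$ is right $2$-adjoint to $- \times X$: since $\Cat$ is cartesian closed, $[A \times X, C] \cong [A, [X,C]]$ naturally in $A$ and $C$, so $[X,-]$ preserves all $\Cat$-weighted limits as well as adjunctions. A right adjoint right inverse amounts to an adjunction $i \dashv r$ together with the equation $i\,r = \id$, and a $2$-functor preserves both; hence $[X,-]$ carries this datum to an adjunction $[X,i] \dashv [X,r]$ with $[X,i]\,[X,r] = \id$. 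That is, $[X,i]$ again has a right adjoint right inverse.

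It remains to identify $[X,i]$ with the canonical comparison functor for $[X,p]$. Because $E^\2 = [\2,E]$ is a cotensor and $\comma{B}{p}$ is a comma object --- both $\Cat$-weighted limits, the latter computed as the limit displayed in the discussion of comma categories above --- the limit preservation of $[X,-]$ supplies canonical isomorphisms $[X, E^\2] \cong [X,E]^\2$ and $[X, \comma{B}{p}] \cong \comma{[X,B]}{[X,p]}$. Under these identifications $[X,i]$ becomes exactly the functor obtained by applying $[X,p]$, namely the canonical comparison $[X,E]^\2 \to \comma{[X,B]}{[X,p]}$, which therefore has a right adjoint right inverse; Theorem~\ref{catadjthm}(ii) then shows that $[X,p]$ is a fibration. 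I expect the main obstacle to be precisely this last bookkeeping: verifying that the naturality of the cotensor and comma-object constructions makes $[X,i]$ coincide with the comparison functor on the nose, rather than merely isomorphic to it over $\comma{[X,B]}{[X,p]}$. A more hands-on alternative would lift a natural transformation in $[X,B]$ to a cartesian morphism in $[X,E]$ pointwise, using the cartesian lifts of Definition~\ref{defn:1-fib}; there the difficulty migrates to checking that the pointwise lifts assemble into a functor $X \to E$ and a genuinely cartesian transformation --- exactly the universal property that the adjoint formulation packages away.
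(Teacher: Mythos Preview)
Your argument is correct. The paper itself gives no proof beyond a citation to \cite[3.6]{grayfibred}, so there is little to compare directly; but your route via Theorem~\ref{catadjthm}(ii) and the $2$-functoriality of $[X,-]$ is a clean conceptual proof. The key observations --- that $[X,-]$, being a $\Cat$-enriched right adjoint, preserves cotensors, comma objects, and adjunctions with invertible counit --- are all sound, and the identification of $[X,i]$ with the comparison functor for $[X,p]$ follows from the naturality of these limit isomorphisms (the comparison functor is itself the unique map induced by the cone data, and $[X,-]$ carries cone data to cone data). Gray's original argument is essentially the hands-on alternative you sketch at the end: lift a natural transformation $\beta \colon G \Rightarrow p F$ componentwise to cartesian arrows and verify that the chosen domains assemble into a functor and that the resulting transformation is $[X,p]$-cartesian. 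Your approach trades that pointwise verification for a single appeal to preservation of weighted limits; Gray's buys concreteness and avoids invoking the adjoint characterization, at the cost of the naturality check you flag.
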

\begin{proof}
See \cite[3.6]{grayfibred}.
\end{proof}

\subsection{Two-sided discrete fibrations} Finally, we reach the variant of interest.

\begin{defn}\label{cattwodefn} 
A \emph{two-sided discrete fibration} is a span $A \sr{q}{\la} E \sr{p}{\ra} B$ such that 
\begin{enumroman} 
\item each $qe \ra a'$ in $A$ has a unique lift in $E$ that has domain $e$ and lies in the fiber over $pe$
\item each $b' \ra pe$ in $B$ has a unique lift in $E$ that has codomain $e$ and lies in the fiber over $qe$ 
\item for each $f \colon e \ra e'$ in $E$ the codomain of the lift of $qf$ equals the domain of the lift of $pf$ and their composite is $f$. 
\end{enumroman}
The situation is depicted in Figure \ref{fig:fibrs}, where in the lower right corner the ``vertical-cartesian'' factorization of an arrow $f$ is depicted.
\end{defn}

Let $\DFib(A,B)$ denote the full subcategory of $\Span(A,B)$ on the two-sided discrete fibrations. 
\begin{figure}
\begin{tikzpicture}[scale=.6]
\begin{scope}
\fill[gray!15] (0,0) ellipse (3cm and 2cm);
\fill (1.5,1) circle (2pt) node (vp0) {};
\fill (1.5,-.25) circle (2pt) node (vp1) {};
\fill (0,-1) circle (2pt) node (vq1) {};
\fill (-1.25,-1) circle (2pt) node (vq0) {};
\draw[->] (vp0) -- (vp1);
\draw[->] (vq0) -- (vq1);
\node[draw,dashed,fit=(vp0) (vp1)] (U) {};
\node[draw,dashed,fit=(vq0) (vq1)] (V) {};
\node[black!75] at (-.75,.75) {\huge$\boldsymbol E$};
\end{scope}
\begin{scope}[xshift=9cm]
\fill[gray!15] (0,0) ellipse (3cm and 2cm);
\fill (-1.5,1) circle (2pt) node (vp0) {};
\fill (-1.5,-.25) circle (2pt) node[below] (vp1) {$p$};
\fill (0,-1) circle (2pt) node (rightid) {};
\draw[->, shorten >=3pt] (vp0) -- (vp1);
\node[draw,dashed,fit=(vp0) (vp1)] (A) {};
\node[draw,dashed,fit=(rightid)] (B) {};
\node[black!75] at (.75,.75) {\huge$\boldsymbol B$};
\end{scope}
\begin{scope}[yshift=-7cm]
\fill[gray!15] (0,0) ellipse (3cm and 2cm);
\fill (0,1) circle (2pt) node (vq1) {};
\fill (-1.25,1) circle (2pt) node[left] (vq0) {$q$};
\fill (1.5,-.5) circle (2pt) node (lowid) {};
\draw[->, shorten <=4pt] (vq0) -- (vq1);
\node[draw,dashed,fit=(vq0) (vq1)] (C) {};
\node[draw,dashed,fit=(lowid),red] (D) {};
\node[black!75] at (-.75,-.75) {\huge$\boldsymbol A$};
\end{scope}
\draw[dashed,->,-latex,thick,blue] (U) -- (U |- D.north) node[midway,left] {$q$};
\draw[dashed,->,-latex,thick,red] (U) -- (U -| A.west) node[midway,above] {$p$};
\draw[dashed,->,-latex,thick,red] (V) -- (V -| B.west) node[midway,above] {$p$};
\draw[dashed,->,-latex,thick,blue] (V) -- (V |- C.north) node[midway,left] {$q$}; 
\begin{scope}[xshift=6cm,yshift=-5cm]
\fill (0,0) circle (2pt) node (f0) {};
\fill (1,-1) circle (2pt) node (f1) {};
\fill (f0 -| f1) circle (2pt) node (ff) {};
\draw[->] (f0) -- (ff);
\draw[->] (ff) -- (f1);
\end{scope}
\begin{scope}[xshift=11cm,yshift=-5cm]
\fill (0,0) circle (2pt) node (pf0) {};
\fill (1,-1) circle (2pt) node (pf1) {};
\fill (pf0 -| pf1) circle (2pt) node (pff) {};
\draw[double] (pf0) -- (pff);
\draw[->] (pff) -- (pf1);
\end{scope}
\begin{scope}[xshift=6cm,yshift=-8cm]
\fill (0,0) circle (2pt) node (qf0) {};
\fill (1,-1) circle (2pt) node (qf1) {};
\fill (qf0 -| qf1) circle (2pt) node (qff) {};
\draw[->] (qf0) -- (qff);
\draw[double] (qff) -- (qf1);
\end{scope}
\node[draw,dashed,fit=(f0) (ff),blue] (box1) {};
\node[draw,dashed,fit=(f1) (ff),red] (box2) {};
\node[draw,dashed,fit=(pf0) (pff),blue] (pbox1) {};
\node[draw,dashed,fit=(pf1) (pff),red] (pbox2) {};
\node[draw,dashed,fit=(qf0) (qff),blue] (qbox1) {};
\node[draw,dashed,fit=(qf1) (qff),red] (qbox2) {};
\draw[dashed,thick,->,-latex,red] (box1) -- (pbox1);
\draw[dashed,thick,->,-latex,blue] (box1) -- (qbox1);
\draw[dashed,thick,->,-latex,red] (box2) -- (pbox2);
\draw[dashed,thick,->,-latex,blue] (box2) -- (qbox2);
\end{tikzpicture}
\caption{The cartesian-vertical factorization of a morphism in a span: a fibration determines a factorization system on $E$ in the sense that every $f : e\to e'$ in $E$ admits a factorization as a $p$-vertical arrow followed by a $p$-cartesian arrow; dually, an opfibration determines a $q$-vertical, $q$-cocartesian factorization.}
\label{fig:fibrs}
\end{figure}
The definition of two-sided discrete fibration in fact doesn't add something new to the picture, as
\begin{thm} There exist equivalences of categories \[\DFib(A,B) \simeq [B^\op \times A, \Set] \simeq \DFib(B\times A^\op),\] pseudo-natural in $A$ and $B$.
\end{thm}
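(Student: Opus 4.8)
The plan is to establish the two equivalences separately, exploiting the fact that the middle object $[B^\op \times A, \Set]$ serves as a common target for both. For the right-hand equivalence $[B^\op \times A, \Set] \simeq \DFib(B \times A^\op)$, the cleanest route is to recognize it as an instance of the already-established Theorem~\ref{thm:discrete-equiv}. Indeed, a presheaf on $B^\op \times A$ is the same thing as a presheaf on $(B \times A^\op)^\op = B^\op \times A$, so $[B^\op \times A, \Set] = [(B \times A^\op)^\op, \Set] \simeq \DFib(B \times A^\op)$ by applying Theorem~\ref{thm:discrete-equiv} with the base category $B \times A^\op$. So I would first dispose of this half by citing the one-sided theorem directly.

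The substantive content is therefore the left-hand equivalence $\DFib(A,B) \simeq [B^\op \times A, \Set]$, and here I would construct the comparison functors explicitly and mimic the proof of Theorem~\ref{thm:discrete-equiv}. In one direction, given a two-sided discrete fibration $A \xleftarrow{q} E \xrightarrow{p} B$, I would define a functor $B^\op \times A \to \Set$ on objects by sending $(b, a)$ to the set $E_{(b,a)}$ of objects $e \in E$ with $pe = b$ and $qe = a$. On morphisms, a pair $(\beta \colon b' \to b, \alpha \colon a \to a')$ should act on $e \in E_{(b,a)}$ by first taking the unique cartesian lift of $\beta$ with codomain $e$ (condition (ii)), landing in $E_{(b', a)}$, then taking the unique vertical/cocartesian lift of $\alpha$ with domain that object (condition (i)), landing in $E_{(b', a')}$. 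I would check functoriality and the interchange of the two liftings using the uniqueness of lifts together with condition (iii), which guarantees the cartesian and cocartesian factorizations are compatible. In the reverse direction, given $F \colon B^\op \times A \to \Set$, I would form a span whose apex $E$ has as objects the disjoint union $\coprod_{(b,a)} F(b,a)$ of elements, with a morphism $e \to e'$ (where $e \in F(b,a)$, $e' \in F(b', a')$) consisting of a compatible pair $(\beta \colon b \to b', \alpha \colon a \to a')$ carrying $e$ to $e'$ under the functorial action; the projections $q, p$ read off the $A$- and $B$-components. This is a two-sided analogue of the category of elements, and I would verify directly that $q$ and $p$ satisfy (i)--(iii).

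The main obstacle I anticipate is verifying the three axioms (i)--(iii) for the span $A \leftarrow E \rightarrow B$ built from a presheaf $F$, and dually checking that the presheaf extracted from a two-sided discrete fibration is well-defined and functorial. The delicate point is condition (iii): I must show that the cartesian factorization in the $B$-direction and the cocartesian factorization in the $A$-direction genuinely commute, i.e.\ that an arbitrary morphism of $E$ factors uniquely as a $q$-vertical arrow followed by a $p$-vertical arrow (equivalently that the two one-sided factorization systems of Figure~\ref{fig:fibrs} are orthogonal and jointly reconstruct every arrow). For the presheaf side this amounts to the functoriality of the two commuting actions encoded by the bifunctor $F$, which is essentially automatic; the work is in translating this cleanly into the span axioms and back. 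Once both comparison functors are in hand, I would argue they are mutually inverse up to natural isomorphism, again leveraging uniqueness of lifts exactly as in the one-sided case.

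Finally, for the claimed pseudo-naturality in $A$ and $B$, I would observe that both constructions are functorial in the base: a pair of functors $A \to A'$, $B \to B'$ induces compatible restriction/reindexing operations on presheaf categories and, via pullback of spans, on the fibration categories, and the comparison equivalences intertwine these up to coherent natural isomorphism. I expect this naturality to follow formally from the explicit formulas once the equivalence itself is established, so I would state it and defer the routine coherence check to the reader, consistent with the expository tone of the surrounding material.
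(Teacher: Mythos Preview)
Your proposal is correct and follows essentially the same approach as the paper: both directions of the left-hand equivalence are constructed exactly as you describe, with the fiber assignment $(b,a)\mapsto E_{a,b}$ in one direction and the two-sided category of elements (morphisms being pairs $(\alpha,\beta)$ with $\alpha_*(e)=\beta^*(e')$) in the other. You are in fact more thorough than the paper, which omits any discussion of the right-hand equivalence and of pseudo-naturality; your reduction of the former to Theorem~\ref{thm:discrete-equiv} is the intended argument.
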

\begin{proof} Given a two-sided discrete fibration $A \sr{q}{\la} E \sr{p}{\ra} B$, define $B^\op \times A \ra \Set$ by $(b,a) \mapsto E_{a,b}$, the objects in the fiber over $a$ and $b$. Given $g \colon b' \ra b$, the corresponding function $g^* \colon E_{a,b} \ra E_{a,b'}$ sends $e \in E_{a,b}$ to the domain of the unique lift of $g$ in the fiber over $a$ with codomain $e$; likewise, given $f \colon a \ra a'$ the corresponding $f_* \colon E_{a,b} \ra E_{a',b}$ sends $e$ to the codomain of the  unique lift of $f$ in the fiber over $b$ with domain $a'$.

Conversely, given $P\colon B^\op \times A \ra \Set$, let the objects of $E$ be triples $(b \in B, e \in P(b,a), a \in A, )$ and morphisms $(b, e, a) \ra (b',e', a')$ be pairs of arrows $f\colon a \ra a'$ in $A$ and $g\colon b \ra b'$ in $B$ such that $f_*(e) = g^*(e')$.\footnote{Note this isn't the \emph{collage} of $P$, a category living over the walking arrow $\2$, defined below. Rather it's the category of sections of this functor, with morphisms the natural transformations. See the $n$Lab discussion of two-sided fibrations.}
\end{proof}

Comma categories provide an important class of examples of two-sided discrete fibrations. In fact, in $\Cat$, they tell the whole story.

\begin{thm}\label{catcommathm} For any opspan $B \sr{f}{\ra} C \sr{g}{\la} A$, its comma category \[\xymatrix{ \comma{f}{g} \ar[d]_{c} \ar[r]^d & B \ar[d]^f \\ A \ar[r]_g \ar@{}[ur]|{\Leftarrow} & C}\] is a two-sided discrete fibration $A \sr{c}{\la} \comma{f}{g} \sr{d}{\ra} B$. Furthermore, all two-sided discrete fibrations in $\Cat$ arise this way.
\end{thm}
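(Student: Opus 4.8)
The plan is to treat the two assertions separately: the forward direction (comma categories are two-sided discrete fibrations) is a direct verification of the three clauses of Definition~\ref{cattwodefn}, while the converse (every two-sided discrete fibration arises this way) is most cleanly obtained by passing through the classification of such fibrations by profunctors established in the preceding theorem.

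For the forward direction, I would write an object of $\comma{f}{g}$ as a triple $(b, \phi\colon fb \ra ga, a)$ and a morphism as a pair $(\alpha\colon a \ra a', \beta\colon b \ra b')$ satisfying the commutativity constraint $g\alpha \cdot \phi = \phi' \cdot f\beta$, with $c$ and $d$ the projections onto the $A$- and $B$-coordinates. Axiom (i) asks for the unique lift of a map $\alpha\colon a \ra a'$ with prescribed domain $(b,\phi,a)$ lying over $1_b$; since a $B$-component equal to $1_b$ forces the codomain structure map to be $\phi' = g\alpha \cdot \phi$, both the lift $(\alpha, 1_b)$ and its codomain are uniquely determined. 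Axiom (ii) is dual: the unique lift of $\beta\colon b' \ra b$ with codomain $(b,\phi,a)$ lying over $1_a$ is $(1_a, \beta)$, whose domain has structure map $\phi \cdot f\beta$. For axiom (iii), given a general morphism $(\alpha,\beta)$, the lifts of $c(\alpha,\beta) = \alpha$ and $d(\alpha,\beta) = \beta$ produced above compose as $(1_{a'},\beta)\cdot(\alpha,1_b)$, and the requirement that the intermediate object agree on the nose recovers \emph{precisely} the commuting-square condition defining a morphism of the comma category --- so this axiom holds automatically, which is the conceptual heart of why commas are two-sided discrete fibrations.

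For the converse, I would first record that the profunctor attached to a comma category $\comma{f}{g}$ by the previous theorem is $(b,a) \mapsto C(fb, ga)$, the set of heteromorphisms in $C$; this is immediate from the description of the objects of $\comma{f}{g}$ as triples. It therefore suffices to show that an arbitrary $P\colon B^\op \times A \ra \Set$ is of the form $C(f-, g-)$ for some opspan. The right $C$ is the \emph{collage} of $P$: its object set is $\ob A \sqcup \ob B$, its hom-sets are $A(a,a')$, $B(b,b')$, the set $P(b,a)$ of heteromorphisms from $b\in B$ to $a \in A$, and the empty set in the other direction, with composition supplied by the two functorial actions of $P$. With $f\colon B \ra C$ and $g\colon A \ra C$ the evident full inclusions, one has $C(fb, ga) = P(b,a)$ by construction, so $\comma{f}{g}$ carries the profunctor $P$; tracing through the morphism descriptions then confirms that the induced isomorphism $\comma{f}{g} \cong E$ is compatible with the projections to $A$ and $B$, i.e.\ is an isomorphism of spans.

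The hard part will be the bookkeeping in the converse rather than any genuine difficulty: one must check that composition in the collage $C$ is well defined and associative (this reduces to the functoriality of $P$ in each variable together with the compatibility of its left $A$-action and right $B$-action on the heteromorphism sets), and then verify that the commuting-square condition for morphisms of $\comma{f}{g}$ translates, under $C(fb,ga) = P(b,a)$, into exactly the equation $P(b,\alpha)(\psi) = P(\beta, a')(\psi')$ that defines morphisms of the two-sided discrete fibration $E$ in the previous theorem. Everything else is a routine unwinding of definitions, which the authors will presumably leave to the reader.
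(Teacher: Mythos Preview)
Your proof is correct in both directions. The forward verification of Definition~\ref{cattwodefn} is carried out exactly as it should be, and your observation that axiom~(iii) is \emph{equivalent} to the commuting-square condition defining a morphism of the comma category is precisely the point. For the converse, reducing via the preceding equivalence $\DFib(A,B)\simeq[B^\op\times A,\Set]$ and then realizing an arbitrary profunctor as $C(f-,g-)$ by taking $C$ to be its collage is a clean and fully rigorous argument; the bookkeeping you flag (associativity of composition in the collage, and matching the morphism conditions) is routine and goes through as you describe.

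As for comparison with the paper: there is nothing to compare. The paper's own proof consists solely of the citation ``See \cite[14]{streetfibrationsyoneda},'' so you have supplied considerably more than the authors do. It is worth noting that your collage argument is not just one possible approach but is in fact the one that connects most directly to the paper's later development: the collage of a profunctor reappears in \S\ref{codiscsec} as the prototypical two-sided codiscrete cofibration, and the final subsection explains that in $\Cat$ the comma and cocomma (collage) constructions are related by an idempotent adjunction restricting to an equivalence between two-sided discrete fibrations and two-sided codiscrete cofibrations. Your proof of the converse is essentially a concrete instance of that equivalence, so it fits the paper's narrative better than a bare citation does.
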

\begin{proof} See \cite[14]{streetfibrationsyoneda}. 
\end{proof}

Finally, for completeness, we give the definition of two-sided fibrations, which aren't required to be discrete.

\begin{defn} A span $A \sr{q}{\la} E \sr{p}{\ra} B$ is a \emph{two-sided fibration} if \begin{enumroman} \item Any $g\colon qe \ra a \in A$ has an opcartesian lift with domain $e$ that lies in the fiber over the identity at $pe$. \item Any $f \colon b \ra pe$ has a cartesian lift with codomain $e$ that lies in the fiber over the identity at $qe$. \item Given a cartesian lift $f^*e \ra e$ of $f$ and an opcartesian lift $e \ra g_!e$ of $g$, as above, the composite \[ f^*e \ra e \ra g_!e\] lies over both $f$ and $g$. If $f^*e \ra g_!f^*e$ and $f^*g_!e \ra g_!e$ denote its opcartesian and cartesian lifts, then the canonical comparison $g_!f^*e \ra f^*g_!e$ induced by the universal property of either of these must be an isomorphism. \end{enumroman}
\end{defn}

Two-sided fibrations determine pseudo-functors $\B^\op \times \A \rightsquigarrow \Cat$.
\section{Fibrations in 2-categories}\label{2catsec}

The notions of fibration and two-sided discrete fibration internal to a 2-category are due to \cite{streetfibrationsyoneda}; a good summary of the main results can be found in \cite[\S 2]{weberyoneda}. In order to perform desired constructions, we work in \emph{finitely complete} 2-categories $\K$, i.e., a 2-category that admits finite conical limits and cotensors with the ``walking arrow'' category $\2$ --- though the weaker hypothesis that $\K$ admits only PIE-limits would also suffice. In particular:

\begin{lem} A finitely complete 2-category $\K$ has all comma objects.
\end{lem}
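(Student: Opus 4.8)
The plan is to construct the comma object exactly as the excerpt's treatment of comma categories in $\Cat$ suggests: as a single finite conical limit built out of one cotensor. Given an opspan $B \sr{f}{\ra} C \sr{g}{\la} A$ in $\K$, first I would form the cotensor $C^\2$, which exists by hypothesis and comes equipped with two projections $C^\2 \ra C$ picking out domain and codomain together with a universal 2-cell between them; its defining property is the 2-natural isomorphism $\K(X, C^\2) \cong \K(X,C)^\2$, identifying maps $X \ra C^\2$ with 2-cells between maps $X \ra C$. I would then form the limit of the diagram
\[
\xymatrix@=10pt{
	& A \ar[dr]^g\ar@{=}[dl]&& C^{\2} \ar[dr]^d\ar[dl]_c&& B\ar@{=}[dr]\ar[dl]_f \\
	A && C && C && B,
}
\]
equivalently the pullback of the $(\text{domain},\text{codomain})$ map $C^\2 \ra C \times C$ against $f \times g \colon B \times A \ra C \times C$. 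This is a finite conical limit---in fact only finite products and a pullback are needed alongside the cotensor---so it exists in $\K$, and I declare it to be $\comma{f}{g}$.

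Next I would exhibit the data making this object a comma object. The legs of the limit cone to $A$ and to $B$ serve as the two projections, while whiskering the cotensor's universal 2-cell along the leg to $C^\2$ produces a 2-cell between the two composites to $C$, which is the comma 2-cell. So far this is purely formal: every piece is named by a universal property we already have in hand.

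The verification of the universal property is the step I would treat most carefully, and it is where I expect the only genuine content to lie. Since representable 2-functors $\K(X,-) \colon \K \ra \Cat$ preserve all limits, they carry both the cotensor and the conical limit to the corresponding constructions in $\Cat$. Applying $\K(X,-)$ therefore exhibits $\K(X, \comma{f}{g})$ as the pullback of $\K(X,C)^\2 \ra \K(X,C) \times \K(X,C)$ against $\K(X,f) \times \K(X,g)$, which by the excerpt's own computation is precisely the comma category $\comma{\K(X,f)}{\K(X,g)}$ in $\Cat$. As this identification is 2-natural in $X$, it says exactly that the object we built represents the 2-functor $X \mapsto \comma{\K(X,f)}{\K(X,g)}$, and this is the defining one- and two-dimensional universal property of a comma object. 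The main obstacle is thus bookkeeping rather than construction: I must check that the 2-cell obtained by whiskering the cotensor's universal 2-cell is the one classified under these representable isomorphisms, so that the abstract representability statement unwinds to the concrete universal property of $\comma{f}{g}$ with its stated projections and 2-cell.
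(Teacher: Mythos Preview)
Your proposal is correct and follows essentially the same approach as the paper: build $\comma{f}{g}$ as the wide pullback of the displayed diagram involving the cotensor $C^\2$, obtain the 2-cell by whiskering, and verify the universal property representably by reducing to the known computation in $\Cat$. The only cosmetic difference is that the paper first pauses to observe that $C^\2$ is itself the comma object of the identity opspan on $C$, whereas you invoke the defining isomorphism $\K(X,C^\2)\cong \K(X,C)^\2$ directly; these amount to the same thing.
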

\begin{proof}
First note that
\[
\xymatrix@=10pt{ & A^\mathbb{2} \ar[dl]_c \ar[dr]^d \ar@{}[dd]|{\Leftarrow} \\ A \ar@{=}[dr] & & A \ar@{=}[dl] \\ & A}
\] is a comma object, where $d$ and $c$ are induced by the domain and codomain inclusions $\mathbb{1} \rightrightarrows \2$. To see this, recall that comma objects, like all weighted limits, are defined \emph{representably}, meaning in this case that the comma object $\comma{A}{A}$ of the depicted opspan must induce isomorphisms of categories \[ \K(X,\comma{A}{A}) \cong \comma{\K(X,A)}{\K(X,A)}\] for all $X \in \K$, where the right hand side denotes the comma category for the pair of identity functors on $\K(X,A)$. But we know that in $\Cat$, this comma category is $\K(X,A)^\2$. Hence, $\comma{A}{A}$ must induce isomorphisms of categories \[\K(X,\comma{A}{A}) \cong \K(X,A)^{\2}\]  which is the defining universal property of the cotensor of $A \in \K$ by $\2$.

Given an opspan $A \sr{g}{\ra} C \sr{f}{\la} B$ in $\K$, its comma object is the wide 2-pullback 
\[
\xymatrix@=10pt{
	& A \ar[dr]^g\ar@{=}[dl]&& C^{\2} \ar[dr]^d\ar[dl]_c&& B\ar@{=}[dr]\ar[dl]_f \\
	A && C && C && B.
}
\]
with 2-cell defined by whiskering the 2-cell of the comma object $C^\2$. This can be proven directly in $\Cat$, implying the result for a generic 2-category $\K$ by the representability of weighted limits.
\end{proof}

Another proof of the previous lemma, left as an exercise to the reader, uses the pasting lemma for comma squares.

\begin{lem}\label{pastinglem} Given a diagram in a 2-category $\K$ such that the right-hand square is a comma square \[\xymatrix{ \cdot \ar[d] \ar[r] & \cdot \ar[d] \ar[r] & \cdot \ar[d] \\ \cdot \ar[r] & \cdot \ar[r] \ar@{}[ur]|{\Leftarrow} & \cdot}\] the whole diagram is a comma square if and only if the left-hand square is a 2-pullback.
\end{lem}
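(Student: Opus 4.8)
The plan is to reduce the statement to an elementary check in $\Cat$ and then transport it back to $\K$ by representability, exactly as in the proof of the previous lemma. Label the diagram
\[
\xymatrix@=14pt{ P \ar[d]_p \ar[r]^h & Q \ar[d]_q \ar[r]^k & R \ar[d]^r \\ A \ar[r]_u & S \ar[r]_v \ar@{}[ur]|{\Leftarrow\,\alpha} & C }
\]
so that the right square exhibits $Q$ as the comma object $\comma{v}{r}$ with projections $q,k$ and $2$-cell $\alpha\colon vq \Rightarrow rk$, the left square commutes strictly ($up=qh$), and the pasted outer $2$-cell is $\beta := \alpha h \colon vup = vqh \Rightarrow rkh$.

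Since $\K$ is finitely complete, both the comma object $\comma{vu}{r}$ of the opspan $A \xrightarrow{vu} C \xleftarrow{r} R$ and the $2$-pullback $A\times_S Q$ of $A \xrightarrow{u} S \xleftarrow{q} Q$ exist in $\K$. The cone data carried by $P$ induce canonical comparison $1$-cells $w\colon P \to \comma{vu}{r}$ (from $p$, $kh$, and $\beta$) and $e\colon P \to A\times_S Q$ (from $p$, $h$, and $up=qh$). By the universal properties of these limits, the outer rectangle is a comma square precisely when $w$ is an isomorphism, and the left square is a $2$-pullback precisely when $e$ is an isomorphism. Thus it suffices to produce an isomorphism $\theta\colon A\times_S Q \xrightarrow{\ \sim\ } \comma{vu}{r}$ in $\K$ with $\theta e = w$; then $w$ is invertible if and only if $e$ is.

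Both $\comma{vu}{r}$ and $A\times_S Q$ are weighted or conical limits, so by representability of limits it is enough to construct $\theta$ and check $\theta e = w$ after applying each hom-$2$-functor $\K(W,-)$, i.e.\ to argue in $\Cat$. There a $W$-point of $A\times_S Q$ is a pair consisting of a $W$-point $a$ of $A$ and a $W$-point $(s,\psi\colon vs\Rightarrow r\rho,\rho)$ of the comma object $Q=\comma{v}{r}$, subject to $ua=s$; discarding the redundant $s$, this is exactly a $W$-point $(a,\psi\colon vua\Rightarrow r\rho,\rho)$ of $\comma{vu}{r}$. This correspondence is functorial and natural in $W$, hence assembles into the desired isomorphism $\theta$. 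Tracing the two comparisons, $\theta e$ and $w$ send each $W$-point of $P$ to the same triple, because the outer $2$-cell $\beta$ is by construction $\alpha$ whiskered by $h$; this yields $\theta e = w$ and completes the reduction.

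The only genuinely delicate point will be the $2$-cell bookkeeping: I must confirm that the $2$-cell carried by the outer rectangle really is $\alpha h$ under the identification $vup = vqh$, so that $\theta$ matches the two comparison $1$-cells rather than merely identifying their underlying spans. Everything else is the standard observation that forming $\comma{vu}{r}$ amounts to pulling back the universal comma square $\comma{v}{r}$ along the leg $u$, so the pasting lemma for comma squares is essentially an instance of the ordinary pasting law for pullbacks. Indeed, combining the decomposition of comma objects into cotensors with $\2$ and $2$-pullbacks from the previous lemma with the pasting law for conical pullbacks gives an alternative, diagram-only proof.
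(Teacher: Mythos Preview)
Your proof is correct and considerably more detailed than the paper's, which consists of the single sentence ``Analogous to the pasting lemma for ordinary pullbacks.'' Your argument makes that analogy precise: you reduce both comparisons $w$ and $e$ to their representable incarnations, identify $\K(W,A\times_S Q)$ with $\K(W,\comma{vu}{r})$ by inspecting $W$-points, and observe that this identification intertwines $e$ with $w$. Your closing remark---that decomposing the comma object as a $2$-pullback of a cotensor with $\2$ reduces everything to the conical pasting law---is exactly the content the paper's one-liner points at, so the approaches agree in spirit; you have simply supplied the bookkeeping.
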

\begin{proof} Analogous to the pasting lemma for ordinary pullbacks.
\end{proof}

\subsection{Fibrations}

Fibrations in a 2-category are defined representably. 

\begin{defn} A 1-cell $p\colon E \ra B$ in a 2-category $\K$ is a \emph{fibration} iff $\K(X,p)$ is a fibration  for all $X \in \K$ and if \[\xymatrix{ \K(X,E) \ar[d]_{\K(X,p)} \ar[r]^{\K(x,E)} & \K(Y,E) \ar[d]^{\K(Y,p)} \\ \K(X,B) \ar[r]_{\K(x,B)} & \K(Y,B)}\] is a cartesian functor for all $x \colon Y \ra X$ in $\K$.
\end{defn}

Unpacking this definition, $p \colon E \ra B$ is a \emph{fibration} if every 2-cell \[\xymatrix{ X \ar[r]^e \ar[dr]_b & E \ar[d]^p \\ \ar@{}[ur]|(.7){\Rightarrow \beta } & B}\] has a $p$-cartesian lift $\a \colon e' \Ra e$ so that $p\a = \beta $. A 2-cell 
\[\xymatrix{ X \ar@/^/[rr]^{e'} \ar@{}[rr]|{\Downarrow \a} \ar@/_/[rr]_e & &E}\] is $p$-\emph{cartesian} when for all $x \colon Y \ra X$, $\a x$ is a $\K(Y,p)$-cartesian arrow in $\K(Y,E)$. This means that for all 2-cells \[\xymatrix{ Y \ar[rr]^{e''} \ar[dr]_x & \ar@{}[d]|{\Downarrow \xi}& E & Y \ar[d]_x \ar[rr]^{e''} &  \ar@{}[d]|{\Downarrow \gamma} & E \ar[d]^p \\ & X \ar[ur]_e &  & X \ar[r]_{e'} & E \ar[r]_p & B}\] such that $p\xi = p\a x \cdot \gamma$, then there is a unique 2-cell $\zeta\colon e'' \Ra e'x$ such that $\xi =  \a x \cdot \zeta$ and $p\zeta = \gamma$.  

Note this definition did not require any hypotheses on the 2-category $\K$, but to prove the results that follow whose statements refer to certain finite 2-limits in $\K$, we do require something like the hypothesis of finite completeness to guarantee that these exist.

\begin{thm} In any finitely complete 2-category $\K$ \begin{enumroman} \item the composite of fibrations is a fibration \item the pullback of a fibration is a fibration \end{enumroman}
\end{thm}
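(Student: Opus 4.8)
The plan is to exploit the representable nature of the definition of fibration in $\K$ and reduce both claims to the corresponding facts in $\Cat$, namely Theorem~\ref{catstabthm}. Recall that $p \colon E \ra B$ is a fibration in $\K$ precisely when (a) each $\K(X,p)$ is a fibration in $\Cat$ and (b) each naturality square associated to $x \colon Y \ra X$ is a cartesian functor. Since the 2-functor $\K(X,-)$ preserves composition on the nose and, being representable, preserves all limits that exist in $\K$, the diagrams witnessing a composite or a pullback in $\K$ are carried to the analogous diagrams in $\Cat$, and this is the mechanism I would use throughout.

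For condition (a) in the composition case, given fibrations $p \colon E \ra B$ and $q \colon B \ra A$, functoriality of $\K(X,-)$ yields $\K(X,qp) = \K(X,q) \circ \K(X,p)$, a composite of fibrations in $\Cat$, hence a fibration by Theorem~\ref{catstabthm}. For the pullback case, applying $\K(X,-)$ to the defining pullback square in $\K$ produces a pullback square in $\Cat$, exhibiting $\K(X,p')$ as the pullback of the fibration $\K(X,p)$ along the image of the pullback leg; this is again a fibration by Theorem~\ref{catstabthm}. This settles part (a) in both cases, using only that $\K(X,-)$ is a continuous 2-functor.

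Condition (b) is where the real work lies, and it rests on two standard characterizations of cartesian arrows in $\Cat$. For a composite $qp$ of fibrations, an arrow is $qp$-cartesian iff it is $p$-cartesian and its $p$-image is $q$-cartesian; for a pullback $p'$ of a fibration, an arrow is $p'$-cartesian iff its image under the projection to $E$ is $p$-cartesian. Granting these, I would verify that the naturality square for $x \colon Y \ra X$ preserves cartesian arrows by tracking each factor separately. In the composition case, a $\K(X,qp)$-cartesian arrow is sent by $\K(x,E)$ to a $\K(Y,p)$-cartesian arrow (as $p$ is a fibration) whose $\K(Y,p)$-image agrees, by naturality of the squares, with the $\K(x,B)$-image of a $\K(X,q)$-cartesian arrow and is therefore $\K(Y,q)$-cartesian (as $q$ is a fibration); by the characterization the result is $\K(Y,qp)$-cartesian. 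The pullback case is analogous, using that the projection squares commute and that $\K(x,E)$ preserves $p$-cartesian arrows.

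The main obstacle is precisely establishing, or else citing, the two $\Cat$-level characterizations of cartesian arrows for composites and pullbacks and then threading them through the naturality squares so that the bookkeeping in condition (b) closes up. Once these are in hand, representability of the definition does the rest, and no hypotheses on $\K$ beyond the existence of the relevant finite 2-limits are required.
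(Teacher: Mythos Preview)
Your proposal is correct and follows exactly the approach the paper intends: the paper's proof is the single line ``Follows from Theorem~\ref{catstabthm},'' and your argument is precisely the representable unpacking of that reduction to $\Cat$. You are more careful than the paper in separately verifying the cartesian-functor condition (b), which the paper leaves entirely implicit, but the underlying strategy is identical.
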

\begin{proof} Follows from Theorem \ref{catstabthm}.
\end{proof}

\begin{thm} Let $\K$ be a finitely complete 2-category, $p \colon E \ra B$ a 1-cell. The following are equivalent: \begin{enumroman} \item $p$ is a fibration. \item For all $b\colon X \ra B$, the map $i\colon X \times_B E \ra \comma{b}{p}$ has a right adjoint in $\K/X$ \[\xymatrix@R=10pt@C=10pt{ X \times_B E \ar@{-->}[dr]_i \ar@/_/[dddr] \ar@/^/[drrr] \\ & \comma{b}{p} \ar[rr] \ar[dd] & & E \ar[dd]^p \\  & & &  \\ & X \ar[rr]_b \ar@{}[uurr]|{\Rightarrow}  & & B}\]

 \item The map $E \ra \comma{B}{p}$ has a right adjoint in $\K/B$. \item the canonical arrow $E^\2 \ra \comma{B}{p}$ has a right adjoint right inverse. \end{enumroman}
\end{thm}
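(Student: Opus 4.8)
The plan is to detect every condition \emph{representably}, reducing the statement to the one-categorical Theorem~\ref{catadjthm} together with the behaviour of comma objects under pullback. Recall that comma objects, the cotensor $(-)^\2$, and finite limits in $\K$ are all defined representably, so applying the $2$-functor $\K(X,-)$ to the data in sight produces canonical isomorphisms of categories $\K(X,\comma{B}{p})\cong\comma{\K(X,B)}{\K(X,p)}$, $\K(X,E^\2)\cong\K(X,E)^\2$, and $\K(X,Y\times_B E)\cong\K(X,Y)\times_{\K(X,B)}\K(X,E)$, under which the canonical comparison $1$-cells of (ii)--(iv) are carried to the corresponding canonical comparison \emph{functors} in $\Cat$. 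By definition, $p$ is a fibration in $\K$ exactly when $\K(X,p)$ is a fibration in $\Cat$ for every $X$ and every restriction square is a cartesian functor.

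The technical heart is a bridge between adjunctions in $\K$ and representable data: a $1$-cell $f$ of $\K$ (respectively of a slice $\K/B$) admits a right adjoint, or a right adjoint right inverse, if and only if each $\K(X,f)$ does so pseudonaturally in $X$. One direction is immediate, since the $2$-functors $\K(X,-)$ preserve adjunctions, right inverses, and invertibility of counits; the converse is the content of the two-dimensional Yoneda lemma, which guarantees that a coherent family of pointwise right adjoints is represented by a single $1$-cell of $\K$ that is then genuinely right adjoint to $f$. I expect this coherence bookkeeping --- checking that the pointwise adjoints assemble pseudonaturally, and that the slice and ``right inverse'' conditions are faithfully reflected --- to be the main obstacle; once it is in place the remaining steps are purely formal.

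With the bridge available I would first prove (i)$\Leftrightarrow$(iv). Applying $\K(X,-)$ sends the canonical $E^\2\to\comma{B}{p}$ to the canonical functor $\K(X,E)^\2\to\comma{\K(X,B)}{\K(X,p)}$, and Theorem~\ref{catadjthm}(ii) says precisely that $\K(X,p)$ is a fibration if and only if this functor has a right adjoint right inverse over $\K(X,B)$. Naturality in $X$ is automatic, so the bridge identifies condition (iv) with ``$\K(X,p)$ is a fibration for all $X$,'' which together with the representable cartesianness of the restriction squares is condition (i).

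Finally I would close the loop through (ii) and (iii). The implication (ii)$\Rightarrow$(iii) is the instance $X=B$, $b=\id_B$, for which $X\times_B E\cong E$ and $\comma{b}{p}=\comma{B}{p}$. For (iii)$\Rightarrow$(ii), the pasting lemma (Lemma~\ref{pastinglem}) exhibits $\comma{b}{p}$ as the pullback $X\times_B\comma{B}{p}$ of the domain projection along $b$, with $i$ the base change of $E\to\comma{B}{p}$; since $b^*\colon\K/B\to\K/X$ is a $2$-functor it preserves right adjoints, producing the desired adjoint in $\K/X$. It then remains to match (iii) with (iv): reducing both representably, one checks in $\Cat$ that a right adjoint to the section $\K(X,E)\to\comma{\K(X,B)}{\K(X,p)}$ and a right adjoint right inverse to $\K(X,E)^\2\to\comma{\K(X,B)}{\K(X,p)}$ each encode a choice of cartesian lifts, and one passes between them by composing with the evident projection and degeneracy functors relating $\K(X,E)^\2$ and $\K(X,E)$. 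Transferring back along the bridge completes the equivalence.
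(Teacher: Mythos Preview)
Your overall strategy---reduce everything representably via the $2$-categorical Yoneda lemma and invoke Theorem~\ref{catadjthm} in $\Cat$---is sound, and your treatment of (ii)$\Leftrightarrow$(iii) via specialisation and the pasting lemma matches the paper's exactly. The paper does not carry out the remaining implications itself: it cites Weber for the link between (i) and (ii)/(iii), noting that it ``requires some cleverness,'' and Street for (i)$\Leftrightarrow$(iv). Your representable approach is a legitimate and more explicit alternative to those citations.

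There is, however, one genuine imprecision to repair in your (i)$\Leftrightarrow$(iv) argument. You write that ``naturality in $X$ is automatic,'' so that the bridge identifies (iv) with the condition that $\K(X,p)$ is a fibration for all $X$, and then append the cartesian-functor clause to recover (i). Read literally this cannot be right: if naturality were automatic, (iv) would be equivalent to the \emph{weaker} statement that each $\K(X,p)$ is a fibration, and (i) would be strictly stronger than (iv). What actually happens is that pseudonaturality of the pointwise right-adjoint-right-inverses $r_X$ is \emph{equivalent} to the cartesian-functor condition: for $x\colon Y\to X$, the composites $x^*r_X$ and $r_Yx^*$ are canonically isomorphic precisely when $x^*\colon\K(X,E)\to\K(Y,E)$ preserves cartesian arrows, since both functors then compute cartesian lifts of the same data. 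The correct chain is therefore: (iv) holds $\Leftrightarrow$ the pointwise RARIs exist and assemble pseudonaturally $\Leftrightarrow$ each $\K(X,p)$ is a fibration \emph{and} every restriction square is a cartesian functor $\Leftrightarrow$ (i). You should make explicit that the second clause in the representable definition of a fibration is exactly what supplies the coherence your bridge requires; this is the point the paper flags as needing care.
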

\begin{proof} 
(iii) is (ii) with $b = 1_B$. (iii) implies (ii) by the pasting lemma \ref{pastinglem}. Equivalence with (i) requires some cleverness; see \cite[2.7]{weberyoneda}. (i) $\Leftrightarrow$ (iv) is analogous to Theorem \ref{catadjthm}; see \cite[9]{streetfibrationsyoneda}.
\end{proof}

An \emph{opfibration} in $\K$ is a fibration in $\K^{\co}$. It follows from characterization (iii) above that any 2-functor between finitely complete 2-categories that preserves comma objects preserves fibrations and opfibrations. We briefly mention the very simplest examples.

\begin{ex} \quad
\begin{enumroman}
\item The fibrations internal to the 2-category $\Cat$ are exactly the fibrations introduced in Definition \ref{defn:1-fib}.
\item A fibration internal to the 2-category $\Cat_{/A}$ is a functor $p\colon E \ra B$ such that arrows $b \ra pe$ in the fiber over an identity in $A$ have $p$-cartesian lifts. If the functors $E \ra A$ and $B \ra A$ are fibrations in $\Cat$ and $p$ preserves cartesian arrows, then $p$ is a fibration in $\Cat$ if and only if it is a fibration in $\Cat_{/A}$. In general, the notion of fibration in $\Cat_{/A}$ is weaker than the notion of fibration in $\Cat$.
\end{enumroman}
\end{ex}

Discrete fibrations in a 2-category $\K$ with cotensors by $\2$ can either be defined representably or in analogy with Definition \ref{internaldefn} and these definitions are equivalent. 

\subsection{Two-sided discrete fibrations} In a 2-category $\K$, we write $\Span(\K)$ for the bicategory of spans in $\K$, whose objects and 1-cells coincide with those in the 1-category underlying $\K$, and whose 2-cells are defined fiberwise. If $\K$ has binary products, the hom-categories $\Span(\K)(A,B)$ are isomorphic to the comma categories $\K_{/A \times B}$; hence, they are actually 2-categories. 

\begin{defn} A span $A \sr{q}{\la} E \sr{p}{\ra} B$ is a \emph{two-sided discrete fibration} if and only if it is representably so, i.e., if for all $X \in \K$, \[ \xymatrix{ \K(X,A) & \K(X,E) \ar[l]_-{\K(X,q)} \ar[r]^-{\K(X,p)} & \K(X,B)}\] is a two-sided discrete fibration.
\end{defn}

As in $\Cat$, comma objects provide examples of two-sided discrete fibrations.

\begin{thm} Given $f \colon A \ra C$ and $g \colon B \ra C$, the span $A \la \comma{f}{g} \ra B$ is a two-sided discrete fibration.
\end{thm}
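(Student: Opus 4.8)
The plan is to exploit the fact that both the notion of two-sided discrete fibration in $\K$ and the notion of comma object in $\K$ are defined \emph{representably}, which reduces the entire claim to the already-established result in $\Cat$, namely Theorem \ref{catcommathm}.

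First I would fix an arbitrary test object $X \in \K$ and apply the representable 2-functor $\K(X,-)$. By the defining universal property of the comma object, established in the preceding lemma, there is an isomorphism of categories
\[
\K(X, \comma{f}{g}) \cong \comma{\K(X,f)}{\K(X,g)},
\]
where the right-hand side is the comma category in $\Cat$ of the induced functors $\K(X,f) \colon \K(X,A) \ra \K(X,C)$ and $\K(X,g) \colon \K(X,B) \ra \K(X,C)$. Under this isomorphism, the span $\K(X,A) \la \K(X, \comma{f}{g}) \ra \K(X,B)$ obtained by whiskering the projections $c$ and $d$ with $\K(X,-)$ is identified with the canonical span of comma-category projections out of $\comma{\K(X,f)}{\K(X,g)}$.

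Next I would invoke Theorem \ref{catcommathm}, which asserts precisely that every comma category in $\Cat$ underlies a two-sided discrete fibration of its associated span. Applied to the opspan of functors above, this shows that $\K(X,A) \la \K(X, \comma{f}{g}) \ra \K(X,B)$ is a two-sided discrete fibration in $\Cat$. Since $X$ was arbitrary, the span $A \la \comma{f}{g} \ra B$ is representably a two-sided discrete fibration, which is exactly what the definition demands.

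The one step requiring genuine care — rather than a routine appeal to representability — will be verifying that the isomorphism $\K(X, \comma{f}{g}) \cong \comma{\K(X,f)}{\K(X,g)}$ actually carries the whiskered projections $\K(X,c)$ and $\K(X,d)$ to the standard comma-category projections, so that it is an isomorphism \emph{of spans} and not merely of the apex categories. This compatibility is forced by the naturality of the comma-object universal property, since the projections $c, d$ together with the defining 2-cell are themselves determined representably; consequently the identification holds on the nose. Spelling out this matching is the substance of the argument, and once it is in hand the conclusion follows immediately from the $\Cat$-level statement.
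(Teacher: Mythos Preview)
Your proposal is correct and follows exactly the same approach as the paper, which simply says ``Because weighted limits are also defined representably, it suffices to prove when $\K= \Cat$. See Theorem \ref{catcommathm}.'' You have carefully unpacked what that one-line reduction means, including the point that the representing isomorphism must respect the span structure, which the paper leaves implicit.
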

\begin{proof} Because weighted limits are also defined representably, it suffices to prove when $\K= \Cat$. See Theorem \ref{catcommathm}.
\end{proof}

\begin{thm} If $A \sr{q}{\la} E \sr{p}{\ra} B$ is a \emph{two-sided discrete fibration}, then $p$ is a fibration and $q$ is an opfibration.
\end{thm}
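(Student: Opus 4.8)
The plan is to exploit the fact that both the hypothesis and the conclusion are detected representably, reducing everything to a computation in $\Cat$. Since the span is a two-sided discrete fibration in $\K$, for every $X \in \K$ the span $\K(X,A) \sr{\K(X,q)}{\la} \K(X,E) \sr{\K(X,p)}{\ra} \K(X,B)$ is a two-sided discrete fibration in $\Cat$; and to prove that $p$ is a fibration in $\K$ I must show that $\K(X,p)$ is a fibration in $\Cat$ for every $X$, together with the cartesian-functor condition for every $x \colon Y \ra X$. The opfibration statement for $q$ I would deduce by duality: axioms (i) and (ii) of Definition \ref{cattwodefn} are interchanged, while (iii) is self-dual, upon passing to $B^\op \sr{p^\op}{\la} E^\op \sr{q^\op}{\ra} A^\op$, so this is again a two-sided discrete fibration, and ``$q$ is an opfibration'' is precisely ``$q^\op$ is a fibration'' for the dual span. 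Thus it suffices to establish the fibration claim for $p$ in $\Cat$ and then do the representable bookkeeping.

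First I would pin down the cartesian lifts in $\Cat$. Given $\beta \colon b' \ra pe$, axiom (ii) furnishes a unique lift $g \colon e' \ra e$ with $qg = 1_{qe}$ and $pg = \beta$; this is my candidate $p$-cartesian lift. To check that it is cartesian, given any $g'' \colon e'' \ra e$ and a factorization $pg'' = \beta \cdot h$, I would factor $g''$ via axiom (iii) into its $p$-vertical and cartesian parts, lift $h$ via axiom (ii) into the fiber over $qe$, and identify the cartesian part of $g''$ with $g$ precomposed with this lift of $h$ using the uniqueness clauses of (i) and (ii); the comparison $k$ is then the composite of the lift of $h$ with the $p$-vertical part of $g''$. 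The main obstacle is the uniqueness half of the cartesian universal property: I expect to lean on the discreteness of the fibers $E_{a,b}$ (forced by the unique-lifting axioms, so that a morphism which is simultaneously $p$- and $q$-vertical is an identity) to rule out competing comparison maps.

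Finally I would dispatch the representable conditions. That $\K(X,p)$ is a fibration for each $X$ is exactly the $\Cat$ statement just proved, applied to the span $\K(X,-)$. For the cartesian-functor condition attached to $x \colon Y \ra X$, I would note that precomposition $\K(x,-)$ is a morphism of spans commuting with both legs, and that it carries the canonical lifts produced by axiom (ii) to canonical lifts, since their defining unique-lifting property is phrased entirely in terms of $p$- and $q$-images, which $\K(x,-)$ preserves; as every $p$-cartesian arrow is a canonical lift precomposed with a $p$-vertical isomorphism, and cartesian arrows are closed under such composites, $\K(x,E)$ preserves all cartesian arrows. I would then record the reference to \cite{streetfibrationsyoneda} for the original treatment.
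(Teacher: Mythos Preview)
Your proposal is correct and aligns with the paper's approach; indeed you supply exactly the ``technical'' details the paper declines to give, since its proof consists only of the remark that the argument is technical together with the observation that the two halves are dual. The one cosmetic difference is in how the duality is packaged: the paper passes globally to $\K^{\co}$, whereas you pass to opposite categories at the $\Cat$ level after representing; since $\K^{\co}(X,-) \cong \K(X,-)^{\op}$, these are the same manoeuvre viewed at different altitudes.
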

\begin{proof} The proof is technical, but  at least the two parts are dual, by interpreting the two-sided discrete fibration in  $\K^{\co}$.
\end{proof}
\begin{rmk}\label{fibinbic}
In his original paper, Street defines fibrations, opfibrations, and two-sided discrete fibrations to be pseudo-algebras for certain 2-monads on the appropriate hom-2-category of $\Span(K)$. For instance, the 2-monad on $\Span(K)(A,B)$ for two-sided discrete fibrations sends a span $A \sr{q}{\la} E \sr{p}{\ra} B$ to the 2-pullback of
\[ \xymatrix@=10pt{ & A^\2 \ar[dl]_{d} \ar[dr]^c & & E \ar[dl]_q \ar[dr]^p & & B^\2 \ar[dl]_d \ar[dr]^{c} \\ A & & A & & B & & B}\] 
See \cite{streetfibrationsyoneda} for details.
\end{rmk}
\subsection{Yoneda lemma} Part of the motivation for defining two-sided discrete fibrations internally to a 2-category was to state and prove a Yoneda lemma in this context. While this is peripheral to our discussion, we nonetheless take a brief detour to give the statement.

\begin{thm} Let $\K$ be finitely complete 2-category, $A \sr{q}{\la} E \sr{p}{\ra} B$ a two-sided discrete fibration, and $f\colon A \ra B$ a 1-cell. The identity 2-cell $\id_f$ induces a canonical arrow $i \colon A \ra \comma{B}{f}$ from the 2-pullback of $f$ along the identity at $B$ to the comma object. Precomposition with $i$ induces a bijection between arrows of spans $\comma{B}{f} \ra E$ and arrows of spans $B \ra E$.
\end{thm}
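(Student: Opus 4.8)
The plan is to name the two sides of the claimed bijection explicitly and then reduce to a computation in $\Cat$. Unravelling the universal property of the comma object, the canonical 1-cell $i\colon A \ra \comma{B}{f}$ induced by $\id_f$ satisfies $c\,i = 1_A$ and $d\,i = f$, so it is exactly a morphism of spans from $A$ --- regarded as the span $A \sr{1_A}{\la} A \sr{f}{\ra} B$ --- to the two-sided discrete fibration $A \sr{c}{\la} \comma{B}{f} \sr{d}{\ra} B$. Precomposition with $i$ therefore carries a span map $\Psi\colon \comma{B}{f} \ra E$ to the span map $\Psi i\colon A \ra E$, and the content of the statement is that every span map $\psi\colon A \ra E$ extends \emph{uniquely} along $i$ to a span map $\comma{B}{f} \ra E$.

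First I would reduce to $\Cat$ by representability. Both the comma object and the notion of two-sided discrete fibration are defined representably, so for each $X \in \K$ the 2-functor $\K(X,-)$ sends the data to the corresponding data in $\Cat$: the category $\K(X,\comma{B}{f})$ is the comma category $\comma{\K(X,B)}{\K(X,f)}$, the span $\K(X,A) \la \K(X,E) \ra \K(X,B)$ is a two-sided discrete fibration, and $\K(X,i)$ is the comparison map induced by the identity 2-cell. By the (strict) 2-categorical Yoneda lemma, a span map $\comma{B}{f} \ra E$ in $\K$ is the same datum as a 2-natural family of span maps $\K(X,\comma{B}{f}) \ra \K(X,E)$ in $\Cat$, and likewise for span maps $A \ra E$; under this identification precomposition with $i$ becomes levelwise precomposition with $\K(X,i)$. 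Hence it suffices to establish the bijection in $\Cat$, naturally in $X$.

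The $\Cat$ computation is the classical Yoneda argument. An object of $\comma{B}{f}$ is a triple $(b,\ \xi\colon b \ra fa,\ a)$, and $i$ sends $a$ to $(fa, 1_{fa}, a)$. Given a span map $\psi\colon A \ra E$ (so $q\psi = 1_A$ and $p\psi = f$, whence $\psi a$ lies over $(a,fa)$), I would define the extension $\Psi$ on $(b,\xi,a)$ to be the domain of the unique lift of $\xi\colon b \ra fa = p(\psi a)$ with codomain $\psi a$ lying in the fiber over $a$, which exists by axiom (ii) of Definition \ref{cattwodefn}; uniqueness of lifts makes this assignment functorial and a map of spans, and since the lift of $1_{fa}$ is the identity at $\psi a$ we get $\Psi i = \psi$. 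For uniqueness, note that each $(b,\xi,a)$ admits a canonical cartesian arrow to $i(a) = (fa,1_{fa},a)$ lying over $\xi$ and over the identity; any span map $\Psi'$ with $\Psi' i = \psi$ must send this arrow to a lift of $\xi$ with codomain $\psi a$ in the fiber over $a$, which axiom (ii) forces to be the chosen lift, so $\Psi' = \Psi$.

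The hard part is not the $\Cat$ computation but the bookkeeping in the reduction: one must check that the extension $\psi \mapsto \Psi$ built from cartesian lifts is \emph{natural in $X$}, so that the levelwise bijection restricts to a bijection between 2-natural families and thus descends to genuine 1-cells of $\K$. This naturality is precisely the assertion that the unique-lifting operation is preserved by the functors $\K(x,-)$ for $x\colon Y \ra X$, which is built into the representable definition of a two-sided discrete fibration; once it is recorded, the fibration axioms do the rest. One should also fix at the outset the strict reading of ``morphism of spans'', so that the claim is a bijection of sets rather than an equivalence of categories; the rigidity of discrete fibrations guarantees this is the right notion.
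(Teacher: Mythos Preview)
Your proof is correct and in fact more complete than what the paper offers. The paper's ``proof'' is avowedly only a sketch: it unpacks the 1-cell $i$ via the universal property of the comma object exactly as you do, displays the two sides of the claimed bijection diagrammatically, and then defers the actual argument to \cite[16]{streetfibrationsyoneda} and \cite[2.12]{weberyoneda}. The bulk of the paper's sketch is instead devoted to explaining how the special case $\K=\Cat$, $A=\mathbb{1}$ recovers the classical Yoneda lemma via the equivalence $\DFib(B)\simeq[B^\op,\Set]$ --- a point you do not make, and which you might consider adding as a sanity check.

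Your route --- reduce to $\Cat$ by representability and then run the explicit unique-lift construction --- is the standard way to flesh out Street's argument, and your remark that naturality in $X$ comes for free from \emph{uniqueness} of lifts is precisely the observation that makes the representable reduction close up (whiskering the chosen lift by $x\colon Y\to X$ produces \emph{a} lift at level $Y$, hence \emph{the} lift). So the two treatments are entirely compatible: you have supplied the mechanics the paper elides, while the paper supplies the conceptual payoff you omit.
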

\begin{proof}[Proof sketch] \eadd{Rather than reproduce the full proof from \cite[16]{streetfibrationsyoneda} or \cite[2.12]{weberyoneda} here, we sketch the main ideas to outline a nice exercise in categorical yoga.}

Unwinding the definition, the 1-cell $i$ is induced by the 2-universal property of the comma object:
\[
\begin{tikzcd}[sep=1.25em]
& A \arrow[ddr, bend left, "f"] \arrow[ddl, bend right, equals] \arrow[d, dashed, "i"] \\ & \comma{B}{f} \arrow[dr, "d"] \arrow[dl, "c"'] \arrow[dd, phantom, "\scriptstyle\Leftarrow"] \\ A \arrow[dr, "f"'] & & B \arrow[dl, equals] \\ & B
\end{tikzcd}
\qquad = \qquad \begin{tikzcd}[sep=1.25em] & A \arrow[dr, "f"] \arrow[dl, equals] \arrow[dd, phantom, "\scriptstyle=\ \id_f"] \\ A \arrow[dr, "f"'] & & B \arrow[dl, equals] \\ & B
 \end{tikzcd}
 \]
The Yoneda lemma asserts that restriction along $i$ induces a bijection between the maps of spans:
	\[ 
\left\{
\vcenter{
\xymatrix@R=3mm@C=3mm{
& A \ar@{.>}[dd]\ar@{=}[dl]\ar[dr]^f & \\
A && B \\
& E\ar[ur]_p\ar[ul]^q &
}}
\right\} 
\leftrightarrows
\left\{
\vcenter{
\xymatrix@R=3mm@C=3mm{
& \comma{B}{f} \ar@{.>}[dd]\ar[dl]_c\ar[dr]^d & \\
A && B \\
& E\ar[ur]_p\ar[ul]^q &
}}
\right\}
	\]

 The classical Yoneda lemma is what we get from this theorem when $\K=\Cat$ and $A=1$ is the terminal category; the other leg $p\colon E \to B$ is then a discrete fibration. In this case,  the above correspondence reduces to
	\[
	\left\{
\vcenter{
\xymatrix@R=4mm@C=4mm{
1 \ar[dr]_b \ar[rr] && E \ar[dl]^p\\
&B &
}}\right\}
\qquad
\leftrightarrows
\qquad
\left\{
\vcenter{
\xymatrix@R=4mm@C=4mm{
\comma{B}{b} \ar[dr]_d\ar[rr] && E \ar[dl]^p\\
&B  &
}}\right\}
	\]
	Note that the left-hand side is isomorphic to the fiber $E_b$. Without loss of generality $p\colon E \to B$ can be thought of $\Sigma\colon\comma{*}{F} \to B$ for some $F \colon B^\op\to \Set$. Now via Theorem \ref{thm:discrete-equiv},  the Yoneda lemma asserts that there is a bijection between functors $\comma{B}{b} \to E$ over $B$ and elements of the set $Fb$.
\end{proof}

\section{Fibrations in bicategories}\label{bicatsec}

The notions of fibration and two-sided discrete fibration internal to a bicategory are due to  \cite{streetfibrationsbicategories}; a good summary of the main results can be found in \cite{carbonijohnsonstreetveritymodulated}. The first two sections are somewhat abbreviated; we excuse this laxity by mentioning that it enables us to quickly get to the main point in the final two sections. The reader who wishes to see statements analogous to those of Section \ref{2catsec} is encouraged to prove them, replacing any 2-limits that appear with the appropriate bilimits. 

Section \ref{codiscsec} relies heavily on the ``codiscrete cofibration'' entry at the $n$Lab.

\subsection{Fibrations}

In a bicategory, it is generally considered unreasonable to ask for an equality of 1-cells, but there is no moral objection to asking 2-cells to be equal. Thus, when defining fibrations internally to a generic bicategory $\K$, we can use the definition of $p$-cartesian 2-cells that was ``unpacked'' above, enabling the definition:

\begin{defn} A 1-cell $p \colon E \ra B$ in a bicategory $\K$ is a \emph{fibration} if for all 1-cells $e \colon X \ra E$ and 2-cells $\a \colon b \Ra pe \colon X \ra B$, there exists a $p$-cartesian $\chi \colon e' \Ra e$ for which there is an isomorphism $b \cong p e'$ whose composite with $p\chi$ is $\a$.
\end{defn}

\begin{ex} The fibrations internal to $\Cat$ as a bicategory are sometimes called \emph{Street fibrations}. Explicitly, a functor $p \colon E \ra B$ is a Street fibration if for every $f \colon b \ra pe$ in $B$, there is a $p$-cartesian arrow $g \colon e' \ra e$ and an isomorphism $h \colon b \cong pe'$ such that $f = pg \cdot h$.
\end{ex}

This notion of fibration is invariant under equivalence of categories. In particular, equivalences of categories are Street fibrations, though they are not necessarily fibrations in the classical sense.

\begin{lem} A 1-cell $p \colon E \ra B$ in a bicategory $\K$ is a fibration if and only if both \begin{enumroman}  \item for all $X \in \K$, $\K(X,p) \colon \K(X,E) \ra \K(X,B)$ is a Street fibration and \item for all 1-cells $x \colon Y \ra X$ in $\K$, precomposition with $x$ induces a map of fibrations $\K(X,p) \ra \K(Y,p)$. \end{enumroman}
\end{lem}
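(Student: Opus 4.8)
The plan is to prove both implications by unwinding definitions, the essential point being that the notion of a \emph{$p$-cartesian 2-cell} appearing in the bicategorical definition of fibration was, back in Section~\ref{2catsec}, itself defined \emph{representably}: a 2-cell $\chi \colon e' \Ra e \colon X \ra E$ is $p$-cartesian exactly when, for every 1-cell $x \colon Y \ra X$, the whiskered arrow $\chi x$ is a $\K(Y,p)$-cartesian arrow in the hom-category $\K(Y,E)$. Reading off the two quantifiers packaged into this definition --- the single instance $x = \id_X$ on the one hand, and the ``for all $x$'' on the other --- is what produces conditions (i) and (ii) respectively.

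For the ($\Leftarrow$) direction, suppose (i) and (ii) hold, and fix a 1-cell $e \colon X \ra E$ together with a 2-cell $\a \colon b \Ra pe$. Since $\K(X,p)$ is a Street fibration by (i), there is a $\K(X,p)$-cartesian arrow $\chi \colon e' \Ra e$ and an isomorphism $h \colon b \cong pe'$ with $p\chi \cdot h = \a$. This supplies every ingredient demanded by the bicategorical definition of fibration save one: $\chi$ must be $p$-cartesian, i.e.\ cartesian over all $Y$ rather than merely over $X$. But this is precisely the content of (ii): precomposition with any $x \colon Y \ra X$ carries the $\K(X,p)$-cartesian arrow $\chi$ to a $\K(Y,p)$-cartesian arrow $\chi x$, which by the representable definition recalled above says exactly that $\chi$ is $p$-cartesian. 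Hence $p$ is a fibration.

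For the ($\Rightarrow$) direction, assume $p$ is a fibration. Condition (i) is immediate: given any $\a \colon b \Ra pe$ in $\K(X,B)$, the fibration hypothesis produces a $p$-cartesian $\chi$ with the required isomorphism, and specializing the representable definition to $x = \id_X$ shows that $\chi$ is already $\K(X,p)$-cartesian, so $\K(X,p)$ is a Street fibration. Condition (ii) --- that precomposition with $x \colon Y \ra X$ preserves \emph{all} $\K(X,p)$-cartesian arrows --- is where the genuine work lies, since a priori we only control the cartesian 2-cells that $p$ itself manufactures. The remedy is the essential uniqueness of cartesian lifts: an arbitrary $\K(X,p)$-cartesian $g \colon e' \Ra e$ is, up to isomorphism, the canonical cartesian lift $\chi$ produced by $p$; since $\chi$ is $p$-cartesian its whiskering $\chi x$ is $\K(Y,p)$-cartesian for every $x$, and because cartesianness is invariant under isomorphism the same conclusion transfers to $g x$.

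I expect this last point --- upgrading preservation of the distinguished lifts to preservation of \emph{every} cartesian arrow --- to be the main obstacle, as it is the only step not settled by pure quantifier-shuffling and it quietly uses that each $\K(X,p)$-cartesian arrow is itself $p$-cartesian. One must also take mild care with bicategorical coherence: in a bicategory the squares expressing that precomposition is a ``map of fibrations'' commute only up to the associativity isomorphisms, so one should confirm that these invertible comparison 2-cells do not disturb cartesianness, which, being an iso-invariant notion, they do not.
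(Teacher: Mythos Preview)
The paper states this lemma without proof, so there is no authorial argument to compare against. Your proposal is correct and is essentially the canonical argument: the bicategorical definition of fibration packages the representable definition of $p$-cartesian 2-cell together with the existence of lifts, and unbundling these two ingredients yields precisely conditions (i) and (ii).

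Your identification of the only nontrivial step --- showing, in the $(\Rightarrow)$ direction, that precomposition preserves \emph{all} $\K(X,p)$-cartesian arrows rather than merely the distinguished lifts supplied by the fibration structure --- is well placed, and your remedy via essential uniqueness of cartesian lifts is the standard one. One can phrase the conclusion slightly more sharply: the argument actually shows that every $\K(X,p)$-cartesian 2-cell is already $p$-cartesian outright, since $p$-cartesianness is defined representably via cartesianness in each hom-category and is therefore itself invariant under composition with isomorphisms. Your closing remark about the associativity isomorphisms in the naturality square for precomposition is also correct and worth recording; it is the only genuinely bicategorical wrinkle, and as you observe it is harmless because cartesianness in a Street fibration is stable under isomorphism.
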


\subsection{Two-sided fibrations and two-sided discrete fibrations} First, we should say a few words about the tricategory $\Span(\K)$. When $\K$ is a bicategory, not a 2-category, we define the 1-cells and 2-cells of the bicategory $\Span(\K)(A,B)$ slightly differently. A morphism of spans from $A$ to $B$ is given by a 1-cell $f$ in $\K$ and isomorphic 2-cells as depicted \[\xymatrix{ & E \ar[dl]_q \ar[dr]^p \ar[dd]^f & \\ A \ar@{}[r]|{\mu \cong} & & B \ar@{}[l]|{\cong\nu} \\ & E' \ar[ul]^{q'} \ar[ur]_{p'} & }\] A 2-cell in $\Span(\K)(A,B)$ is a 2-cell $\theta\colon f \Ra f'$ that pastes together with one of each pair of 2-cell isomorphisms to give the other.

\begin{defn} A span $A \sr{q}{\la} E \sr{p}{\ra} B$ in $\K$ is a two-sided discrete fibration if \begin{enumroman} \item for every $e\colon X \ra E$ and 2-cell $\a\colon qe \Ra a \colon X \ra A$, there exists an opcartesian 2-cell $\chi \colon e \Ra e'$ and isomorphism $qe' \cong a$ whose composite with $q\chi$ is $\a$ and such that $p\chi$ is an isomorphism. \item for every $e\colon X \ra E$ and 2-cell $\beta  \colon b \Ra pe \colon X \ra B$, there exists a cartesian 2-cell $\zeta\colon e' \Ra e$ and isomorphism $b \cong pe'$ whose composite with $p\zeta$ is $\beta$ and such that $q\zeta$ is an isomorphism. \item for all $\eta, \eta' \colon e \Ra e' \colon X \ra E$, if $p\eta = p \eta'$, $q\eta = q\eta'$, and $p\eta$ and $q\eta$ are invertible, then $\eta = \eta'$ and is invertible. \end{enumroman}
\end{defn}

Condition (iii) is equivalent to saying that the span is representably essentially discrete, i.e., for all spans $E'$ from $A$ to $B$, the hom-category \[\Span(\K)(A,B) (E', E)\] is equivalent to a discrete category.

Proof of the following alternate characterization, which is due to \cite{carbonijohnsonstreetveritymodulated} and should be compared with Definition \ref{cattwodefn},  is left as an exercise.

\begin{lem} A span $A \sr{q}{\la} E \sr{p}{\ra} B$ is a two-sided discrete fibration if and only if the following conditions hold. \begin{enumroman} \item for all arrows $e \colon X \ra E$ and 2-cells $\a \colon qe \Ra a$, the category whose objects are pairs $(\chi\colon e \Ra e', \nu \colon qe' \cong a)$ with $\a = \nu \cdot q\chi$ and $p\chi$ invertible is essentially discrete and non-empty; \item for all arrows $e \colon X \ra E$ and 2-cells $\beta  \colon b \Ra pe$, the category whose objects are pairs $(\zeta \colon e' \Ra e, \mu \colon b \cong pe')$ with $\beta = p\zeta \cdot \mu$ and $q\zeta$ invertible is essentially discrete and non-empty; \item each 2-cell $\eta \colon e \Ra e' \colon X \ra E$ is a composite $\zeta \chi$ where $p\chi$ and $q\zeta$ are invertible. \end{enumroman} \end{lem}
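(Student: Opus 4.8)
The plan is to read both characterizations as two packagings of the same data, and to prove the biconditional by translating the \emph{existence of opcartesian (resp.\ cartesian) 2-cells together with a universal property} of the definition into the \emph{nonemptiness and essential discreteness of a category of factorizations} of the lemma, treating the two legs $q$ and $p$ dually throughout. First I would make the universal properties fully explicit: an opcartesian $\chi\colon e\Ra e'$ means that every $\eta\colon e\Ra e''$ equipped with a factorization $q\eta = \sigma\cdot q\chi$ of its image admits a unique $\zeta\colon e'\Ra e''$ with $\zeta\chi = \eta$ and $q\zeta = \sigma$ (and dually for cartesian 2-cells against $p$). A preliminary observation is that the morphisms of the factorization category appearing in clause (i) of the lemma are automatically forced to have invertible $p$- and $q$-components, so ``essentially discrete'' means precisely ``a groupoid with at most one morphism between any two objects''.

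For the direction definition $\Rightarrow$ lemma, the existence clause of (i) produces an object of the factorization category, establishing nonemptiness; its opcartesian universal property then supplies comparison morphisms between any two factorizations, and the rigidity clause (iii) forces these to be unique and invertible, giving essential discreteness. The argument for clause (ii) is dual, read in $\K^{\co}$. For clause (iii) of the lemma I would apply clause (i) of the definition to $\alpha = q\eta$ to obtain an opcartesian $\chi\colon e\Ra e''$ with $p\chi$ invertible together with an isomorphism $\nu\colon qe''\cong qe'$ satisfying $q\eta = \nu\cdot q\chi$, and then invoke the opcartesian universal property of $\chi$ to produce the residual $\zeta\colon e''\Ra e'$ with $\zeta\chi = \eta$ and $q\zeta = \nu$ invertible; this exhibits $\eta = \zeta\chi$ with $p\chi$ and $q\zeta$ invertible.

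The converse, lemma $\Rightarrow$ definition, is where the real work lies. Nonemptiness of the factorization category supplies a candidate $\chi$, but the task is to upgrade essential discreteness to the full opcartesian universal property, which quantifies over \emph{arbitrary} downstairs 2-cells $\sigma$, not only the invertible ones, whereas essential discreteness only compares factorizations that differ by an isomorphism in $A$. The key maneuver is to lift the downstairs 2-cell: given a competitor $\eta\colon e\Ra e''$ with $q\eta = \sigma\cdot q\chi$, I would first factor $\eta$ via clause (iii) of the lemma as $\eta = \zeta_0\chi_0$ with $p\chi_0$ and $q\zeta_0$ invertible, and then apply clause (i) of the lemma to the object $e'$ to produce a $p$-vertical lift of $\sigma$ out of $e'$. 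Composing this lift with $\chi$ yields a second $p$-vertical 2-cell out of $e$ whose $q$-image now agrees with $q\chi_0$ \emph{up to an isomorphism} in $A$, so essential discreteness supplies the unique comparison isomorphism, and assembling the pieces delivers the required $\zeta$; uniqueness of $\zeta$ comes from essential discreteness again. Clause (ii) of the definition is dual, and the rigidity clause (iii) follows by factoring a 2-cell with invertible $p$- and $q$-images through clause (iii) of the lemma and observing that both factors then live inside the essentially discrete factorization categories, forcing them to be invertible and the 2-cell to be determined by its components.

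I expect the main obstacle to be exactly this last direction: extracting a universal property quantified over all downstairs 2-cells from a hypothesis that is only \emph{fiberwise}, namely the essential discreteness of factorizations of a single fixed 2-cell. The lift-then-compare trick sketched above is what bridges this gap. The only remaining subtlety is the bookkeeping needed to see that the ``equality'' form of the rigidity condition in the definition is genuinely recovered from the ``essentially discrete'' packaging, with the relevant comparison 2-cells forced to be identities; everything else reduces to routine dualization between $\K$ and $\K^{\co}$ and to diagram-chasing.
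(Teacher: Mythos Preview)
The paper gives no proof of this lemma: immediately before the statement it says the result ``is due to \cite{carbonijohnsonstreetveritymodulated}'' and that its proof ``is left as an exercise.'' So there is nothing to compare your proposal against on the paper's side.

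As an attempt at the exercise your outline is reasonable and identifies the genuine crux: in the direction lemma $\Rightarrow$ definition, essential discreteness only compares factorizations of a \emph{fixed} $\alpha$, whereas the (op)cartesian universal property quantifies over arbitrary downstairs $\sigma$. Your ``lift $\sigma$ via clause~(i), then compare inside the essentially discrete category for $q\eta$'' maneuver is the right bridge, and it does produce a filler $\zeta$ with $\zeta\chi=\eta$ and $q\zeta=\sigma$. Two places deserve more care than your sketch gives them. First, uniqueness of $\zeta$: from $\zeta\chi=\zeta'\chi$ and $p\chi$ invertible you get $p\zeta=p\zeta'$, and you already have $q\zeta=q\zeta'$, but neither is assumed invertible, so you cannot appeal directly to any discreteness clause; you must again factor $\zeta$ and $\zeta'$ through clause~(iii) of the lemma and chase both pieces through the two essentially discrete categories (for $\sigma$ on the $q$-side and for $p\zeta$ on the $p$-side) before they are forced to agree. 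Second, recovering the rigidity clause~(iii) of the definition: your remark that ``both factors then live inside the essentially discrete factorization categories'' is correct, but you need to argue that the resulting comparison endomorphism of $e'$ is the identity, not merely an isomorphism; this uses that its $p$- and $q$-components are identities and that the relevant factorization category has unique endomorphisms. None of this is a gap in strategy, only in bookkeeping, which is consistent with the paper treating the statement as an exercise.
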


By a comma object in a bicategory, we mean the bilimit with the shape described above, relaxing the defining isomorphism \[\K(X,\comma{f}{g}) \simeq \K(X,f)/\K(X,g)\] of categories to an equivalence.

\begin{thm} Any comma object in a bicategory gives a two-sided discrete fibration.
\end{thm}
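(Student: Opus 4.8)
The plan is to argue representably, reducing to the statement already proved in $\Cat$. Recall that the comma object is a bilimit, so for every $X \in \K$ there is an equivalence of categories $\K(X,\comma{f}{g}) \simeq \K(X,f)/\K(X,g)$, pseudonatural in $X$, under which the two legs $c \colon \comma{f}{g} \ra A$ and $d \colon \comma{f}{g} \ra B$ correspond, up to coherent isomorphism, to the two projections of the comma category. I would use this equivalence to transport the two-sided discrete fibration structure that already exists on the right-hand side back to the span $A \sr{c}{\la} \comma{f}{g} \sr{d}{\ra} B$.

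First I would fix $X$ and invoke the preceding characterization lemma. Read off for $1$-cells $e \colon X \ra \comma{f}{g}$ and $2$-cells into the legs, its three clauses say precisely that the span of hom-categories $\K(X,A) \la \K(X,\comma{f}{g}) \ra \K(X,B)$, with legs $\K(X,c)$ and $\K(X,d)$, is a two-sided discrete fibration in $\Cat$ --- but in the \emph{up-to-isomorphism} form natural to a bicategory, since the clauses only require isomorphisms $qe' \cong a$ and $b \cong pe'$ together with invertibility of $p\chi$ and $q\zeta$, rather than strict fiber membership. This is exactly why the bicategorical definition was phrased with isomorphisms: the resulting property is invariant under equivalence of spans.

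Next I would observe that, by Theorem \ref{catcommathm} applied inside $\Cat$, the comma category $\K(X,f)/\K(X,g)$ is a genuine (strict) two-sided discrete fibration over $\K(X,A)$ and $\K(X,B)$; a fortiori it satisfies the weaker up-to-isomorphism clauses, taking the comparison isomorphisms to be identities. It then remains to carry this property across the defining equivalence: the opcartesian and cartesian lifts produced in $\K(X,f)/\K(X,g)$ are sent by the inverse equivalence to $2$-cells $\chi$ and $\zeta$ of the required shape, the strict fiber conditions becoming invertibility of $p\chi$ and $q\zeta$ after pasting in the coherence isomorphisms of the equivalence; clause (iii), being nothing more than representable essential discreteness, is manifestly stable under equivalence.

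The hard part will be this last transport step. One must check that the up-to-isomorphism two-sided discrete fibration property is genuinely invariant under equivalence of spans, and in particular that the representing equivalence is compatible with \emph{both} projections simultaneously up to coherent isomorphism, so that lifting problems against $\K(X,c)$ (respectively $\K(X,d)$) on one side match lifting problems against the comma projections on the other. This is the bicategorical counterpart of the equivalence-invariance of Street fibrations noted earlier; once it is in place, the reduction to Theorem \ref{catcommathm} is routine, and pseudonaturality in $X$ is inherited from that of the defining equivalence.
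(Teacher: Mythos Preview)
Your proposal is correct and, in fact, more detailed than the paper's own treatment: the paper simply defers to \cite[3.44]{streetfibrationsbicategories} without giving any argument. Your representable reduction to Theorem~\ref{catcommathm} via the defining equivalence $\K(X,\comma{f}{g}) \simeq \comma{\K(X,f)}{\K(X,g)}$ is the natural route and is essentially what one finds upon unpacking Street's proof; your identification of the transport step as the crux --- verifying that the bicategorical clauses (i)--(iii) are invariant under equivalence of spans compatible with both legs --- is exactly the point that needs care, and your remarks about it are accurate.
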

\begin{proof} See \cite[3.44]{streetfibrationsbicategories}.
\end{proof}

\subsection{Two-sided codiscrete cofibrations}\label{codiscsec}

For this section, the motivating example is the 2-category $\K=\V$-$\Cat$ of categories enriched in some complete and cocomplete closed symmetric monoidal category $(\V, \otimes, I)$. We'll see in the next section what is special about the case $\V=\Set$, $\K = \Cat$.

In enriched category theory, $\V$-profunctors play an important role; if $\A,\B \in \V$-$\Cat$, a $\V$-\emph{profunctor} from $\A$ to $\B$ is a $\V$-functor $\B^\op \otimes \A \ra \V$. A warning: unless $\V$ is cartesian monoidal, the tensor product of $\V$-categories is distinct from their cartesian product. The tensor product of $\V$-categories gives the morally correct notion of $\V$-profunctors and is necessary for the construction of collages below.

We would like to be able to model $\V$-profunctors internally to the 2-category of $\V$-categories because this will make it easier to understand which pseudo-functors $\V$-$\Cat \rightsquigarrow \K$ ``preserve'' profunctors. One way to describe the data of a $\V$-profunctor in $\V$-$\Cat$ is through its collage.

\begin{defn} The \emph{collage} of $F \colon \B^\op \otimes \A \ra \V$ is a opspan $\A \ra \E \la \B$, where $\E$ is the $\V$-category with objects $\ob\A \sqcup \ob \B$ and hom-objects \[ \E(b',b) = \B(b',b), \quad \E(b,a) = F(b,a), \quad \E(a,a') = \A(a,a'), \quad \E(a,b) = \emptyset,\] for all $a,a' \in \A$ and $b,b' \in \B$. The $\V$-functors $\A \ra \E$, $\B \ra \E$ are the inclusions.
\end{defn}

The main result is the following theorem of \cite{streetfibrationsbicategories}:

\begin{thm} The collages for $\V$-profunctors are exactly the two-sided codiscrete cofibrations in $\V$-$\Cat$, regarded as a bicategory.
\end{thm}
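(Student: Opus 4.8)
The plan is to pin down a concrete description of two-sided codiscrete cofibrations in $\V$-$\Cat$ and then exhibit the collage construction, together with an explicit inverse, as mutually inverse passages between such opspans and $\V$-profunctors. A two-sided codiscrete cofibration is the notion formally dual to a two-sided discrete fibration, obtained by reversing the 1-cells so that the span becomes an opspan $\A \sr{i}{\ra} \E \sr{j}{\la} \B$; accordingly I would first dualize the explicit lifting and factorization clauses of Definition \ref{cattwodefn} (or, equivalently, the 2-monad of Remark \ref{fibinbic}) term by term. The expected payoff is a concrete reformulation: an opspan $\A \sr{i}{\ra} \E \sr{j}{\la} \B$ is a two-sided codiscrete cofibration exactly when the inclusions $i$ and $j$ are fully faithful and jointly bijective on objects --- so that $\ob\E = \ob\A \sqcup \ob\B$ with $\E(ia,ia') = \A(a,a')$ and $\E(jb',jb) = \B(b',b)$ --- and the backward hom-objects vanish, $\E(ia,jb) = \emptyset$ for all $a \in \A$ and $b \in \B$.

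Granting this reformulation, one direction is immediate: by construction the collage of $F \colon \B^\op \otimes \A \ra \V$ has precisely these hom-objects, with the mixed homs $\E(jb,ia) = F(b,a)$ recording the profunctor, so every collage is a two-sided codiscrete cofibration. For the converse, given a two-sided codiscrete cofibration $\A \sr{i}{\ra} \E \sr{j}{\la} \B$, I would set $F(b,a) := \E(jb,ia)$ and recover a $\V$-profunctor. The essential point is that the two variances of $F$ are supplied by the enriched composition of $\E$: postcomposition $\E(ia,ia') \otimes \E(jb,ia) \ra \E(jb,ia')$ gives covariant functoriality in $\A$, while precomposition $\E(jb',jb) \otimes \E(jb,ia) \ra \E(jb',ia)$ gives contravariant functoriality in $\B$. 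Associativity and the unit laws in $\E$ make these two actions commute, and hence assemble into a single $\V$-functor out of the tensor $\B^\op \otimes \A$. It is exactly here that the tensor product --- rather than the cartesian product --- appears, as the tensor of enriched composition; this is the structural reason collages capture $\V$-profunctors $\B^\op \otimes \A \ra \V$ and not merely functors $\B^\op \times \A \ra \V$, and it accounts for the ``unexpectedly'' of the abstract. That the collage of this $F$ is isomorphic to the original $\E$ is then a matter of bookkeeping, since full faithfulness of the inclusions and the vanishing of the backward homs account for every hom-object of $\E$.

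The main obstacle is the first step: showing that the abstract bicategorical definition of a two-sided codiscrete cofibration genuinely reduces to the three concrete conditions above. This demands a careful reading of the dualized cartesian and opcartesian lifting conditions and, above all, of the codiscreteness clause in the $\V$-enriched setting, where ``discreteness of the fibers'' cannot be taken naively but must be tested representably. I would carry this out by probing the opspan against the hom-$\V$-categories $\V\text{-}\Cat(\E,X)$ and against the free $\V$-category on the walking arrow, verifying that the lifting conditions force the inclusions to be fully faithful and the objects to be partitioned, that no backward morphisms survive, and that codiscreteness kills any structure on the mixed homs beyond their underlying $\V$-object. With this reduction in hand, the back-and-forth of the preceding paragraph completes the equivalence.
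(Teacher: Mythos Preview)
The paper does not prove this theorem; it is stated as ``the following theorem of \cite{streetfibrationsbicategories}'' and immediately followed by the definitions of the terms involved, with no proof or sketch. So there is no argument in the paper to compare your proposal against.

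That said, your outline aims at the right target: the concrete description you state (fully faithful inclusions, objects partitioned as $\ob\A \sqcup \ob\B$ up to equivalence, backward hom-objects initial) is exactly what characterizes collages, and the passage back and forth with $\V$-profunctors via $F(b,a) := \E(jb,ia)$ is correct and routine once that description is in hand. You have also correctly located all of the content in the first reduction step. Two cautions about that step. First, the dualization $\K \mapsto \K^\op$ does not merely reverse the direction of lifts: a fibration in $\K^\op$ is tested against 1-cells $X \to E$ in $\K^\op$, which are 1-cells $E \to X$ in $\K$, so the cartesian conditions become statements about \emph{extending} 2-cells along the inclusions rather than lifting them, and these unwind to representable conditions on $\V$-functors \emph{out of} $\E$ rather than elementwise conditions on morphisms in $\E$. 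Dualizing Definition~\ref{cattwodefn} term by term will therefore not directly hand you full faithfulness and the vanishing of $\E(ia,jb)$; you should instead start from the bicategorical definition in \S\ref{bicatsec} or the pseudomonad of Remark~\ref{fibinbic}. Second, the fact that every object of $\E$ is isomorphic to one in the image of $i$ or $j$ is not a consequence of the cofibration clauses alone --- it requires the codiscreteness clause as well, and in the bicategorical setting you should expect only ``jointly essentially surjective'' rather than ``jointly bijective on objects,'' with the strict form recovered after replacing the opspan by an equivalent one.
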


The reader may have already guessed the following definitions.

\begin{defn}
A opspan $A \ra E \la B$ in a bicategory $\K$ is a \emph{two-sided cofibration} if and only if it is a two-sided fibration in $\K^\op$, the bicategory with 1-cells reversed. The span is \emph{codiscrete} if it is representably discrete in $\Opspan(\K)(A,B) \cong {}^{A \sqcup B/}\K$, that is, if for all opspan $A \ra E' \la B$ the hom-category $\Opspan(\K)(A,B)(E,E')$ is equivalent to a discrete category.
\end{defn}

To give a complete encoding of profunctors from $A$ to $B$ internally to the bicategory $\K$, we need to be able to compose a two-sided codiscrete cofibration from $A$ to $B$ and from $B$ to $C$ and obtain a two-sided codiscrete cofibration from $A$ to $C$. If we removed the word ``codiscrete,'' this would be a piece of cake. So long as $\K$ has finite bicolimits, cofibrations are stable under pushout and composition. Hence, the pushout-composite of a opspan from $A$ to $B$ and a opspan from $B$ to $C$ is a opspan from $A$ to $C$ that is a two-sided cofibration if the original opspans were. This composition law is associative up to isomorphism, which is good enough. 

However, the resulting two-sided cofibration is unlikely to be codiscrete, whether or not the original two-sided cofibrations were. For instance, given $\V$-profunctors $\B^\op \otimes \A \ra \V$ and $\C^\op \otimes \B \ra \V$ and considering their collages, the pushout $\A \ra \E \la \C$ is a $\V$-category with objects $\ob \A \sqcup \ob \B \sqcup \ob \C$ called a \emph{gamut}; because of the presence of objects of $\B$, this is too fat to be a collage for a $\V$-profunctor $\C^\op \otimes \A \ra \V$. 

This problem can be solved provided there is a method for coreflecting from two-sided cofibrations into two-sided codiscrete cofibrations; a subcategory is \emph{coreflective} if the inclusion has a right adjoint.  In some examples, there may be a limit construction that achieves this. This is the approach that Street takes originally, but see \cite{streetfibrationscorrection}.

A simpler approach is to ask that $\K$ have an orthogonal factorization system whose left class is generated by the two-sided codiscrete cofibrations $A \sqcup B \ra E$. An \emph{orthogonal factorization system} in a bicategory consists of two classes $(\mathcal{E},\mathcal{M})$ of 1-cells such that \begin{enumroman} \item Every 1-cell in $\K$ is isomorphic to the composite of a 1-cell in $\mathcal{E}$ followed by a 1-cell in $\mathcal{M}$ \item For all $e \colon X \ra Y \in \mathcal{E}$, $m \colon Z \ra W \in \mathcal{M}$, the square \[ \xymatrix{\K(Y,Z) \ar[d]_{\K(e,Z)} \ar[r]^{\K(Y,m)} & \K(Y,W) \ar[d]^{\K(e,W)} \\ \K(X,Z) \ar[r]_{\K(X,m)} \ar@{}[ur]|{\cong} & \K(X,W) }\] is a bipullback in $\Cat$.
\end{enumroman}

An orthogonal factorization system $(\mathcal{E},\mathcal{M})$ is \emph{generated} by a collection of 1-cells if the right class consists of precisely those 1-cells that satisfy axiom (ii) for all $e$ in this collection. When the generators are taken to be the codiscrete cofibrations, arrows in the right class are necessarily representably fully faithful. If the right class is stable under pushout and cotensor with $\2$, then the composite of a pair of two-sided codiscrete cofibrations can be defined by factoring the opspan $A \sqcup C \ra E$ formed by taking their pushout. This is the approach of \cite{carbonijohnsonstreetveritymodulated} and the $n$Lab.

We record this fact in the following theorem.

\begin{thm} Suppose $\K$ is a bicategory with finite limits and colimits. If the two-sided codiscrete cofibrations $A \sqcup B \ra E$ generate an orthogonal factorization system whose right class is closed under pushout and cotensor with $\2$, then there is a bicategory $\DCof(\K)$ whose objects are the objects of $\K$, whose 1-cells $A \ra B$ are the two-sided codiscrete cofibrations from $A$ to $B$, and whose 2-cells are isomorphism classes of morphisms of opspans.
\end{thm}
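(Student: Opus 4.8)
The plan is to exhibit $\DCof(\K)$ by specifying its data and then checking the bicategory axioms, letting the orthogonal factorization system do all of the real work in horizontal composition. First I would set up the hom-categories: for objects $A, B$, let $\DCof(\K)(A,B)$ have the two-sided codiscrete cofibrations $A \ra E \la B$ as objects and isomorphism classes of morphisms of opspans as arrows. The codiscreteness hypothesis says precisely that each $\Opspan(\K)(A,B)(E,E')$ is essentially discrete, so after passing to isomorphism classes these hom-categories become preorders; vertical composition and identities of $2$-cells are then immediate and strictly unital and associative. This is exactly why taking $2$-cells to be isomorphism classes is both natural and expedient: it collapses in advance the higher coherence that would otherwise need verifying, and it will trivialize the pentagon and triangle below.

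Next I would define horizontal composition. Given $A \ra E \la B$ and $B \ra F \la C$, form the pushout $P := E \sqcup_B F$, which exists since $\K$ has finite colimits; this is the gamut, an opspan $A \sqcup C \ra P$. By stability of two-sided cofibrations under pushout and composition, $A \sqcup C \ra P$ is again a two-sided cofibration. Applying the factorization system to the $1$-cell $A \sqcup C \ra P$ produces $A \sqcup C \xrightarrow{e} G \xrightarrow{m} P$ with $e \in \mathcal{E}$ and $m \in \mathcal{M}$, and I would \emph{define} the composite to be the opspan $A \ra G \la C$. To finish the definition I must check that this is genuinely a two-sided codiscrete cofibration: codiscreteness of $G$ over $A \sqcup C$ comes from $e$ lying in $\mathcal{E}$ (which is generated by the codiscrete cofibrations) together with the representable full faithfulness of $m \in \mathcal{M}$, while the cofibration lifting conditions are transported back from $P$ along the fully faithful $m$. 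This is exactly the point at which closure of $\mathcal{M}$ under pushout and cotensor with $\2$ is consumed.

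The identity $1$-cell on $A$ is the codiscrete cofibration representing the identity profunctor (the collage of the hom-profunctor in the motivating example), and the remaining structural isomorphisms all flow from universal properties. The unitors reduce to the computation that composing with the identity recovers the original codiscrete cofibration up to canonical isomorphism, again using cotensor with $\2$ and the uniqueness clause of the factorization. For associativity, both triple composites are built from a two-step pushout — associative up to canonical isomorphism since $\K$ has finite colimits — followed by a factorization; since any two factorizations of a given $1$-cell are canonically isomorphic, orthogonality supplies a unique invertible comparison, which is the associator. The pentagon and triangle identities then hold automatically: both sides are parallel $2$-cells between codiscrete cofibrations, and by essential discreteness of the relevant hom-categories any two such agree once we pass to isomorphism classes.

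I expect the main obstacle to be the well-definedness step, namely verifying that the factored opspan $A \ra G \la C$ really satisfies all three clauses of the two-sided codiscrete cofibration definition. Concretely, the hard part will be transporting the opcartesian and cartesian $2$-cell lifts from the gamut $P$ back along $m$ to $G$, and checking that the cartesian–vertical factorizations survive; this is where full faithfulness of $m$ and the closure of $\mathcal{M}$ under pushout and cotensor with $\2$ must be used carefully rather than formally. Everything downstream — units, associativity, and the coherence laws — is comparatively routine once composition is known to land in $\DCof(\K)$, precisely because the essential discreteness of the hom-categories renders the coherence conditions vacuous.
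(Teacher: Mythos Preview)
The paper itself offers no argument: its proof is the single line ``See \cite[4.20]{carbonijohnsonstreetveritymodulated}.'' Your outline is broadly the strategy one finds there --- compose codiscrete cofibrations by forming the pushout opspan and then taking the $\mathcal{E}$-part of its $(\mathcal{E},\mathcal{M})$-factorization, using closure of $\mathcal{M}$ under pushout to see that the two bracketings of a triple composite factor the \emph{same} 1-cell $A \sqcup D \to E \sqcup_B F \sqcup_C G$, whence an associator from essential uniqueness of factorizations. So the architecture is sound.

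There is, however, a genuine error in your coherence argument. You assert that codiscreteness makes each $\Opspan(\K)(A,B)(E,E')$ essentially discrete ``so after passing to isomorphism classes these hom-categories become preorders,'' and later that the pentagon and triangle hold because ``any two such agree once we pass to isomorphism classes.'' This conflates two levels. Essential discreteness of $\Opspan(\K)(A,B)(E,E')$ says only that between any two \emph{opspan maps} $E \rightrightarrows E'$ there is at most one 2-cell of $\K$, and it is invertible; it does \emph{not} say that any two opspan maps $E \rightrightarrows E'$ are isomorphic. Consequently $\DCof(\K)(A,B)$ is not a preorder: already for $\K = \Cat$ it is equivalent to the functor category $[B^\op \times A,\Set]$, which has many parallel non-equal natural transformations. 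So the pentagon is not vacuous, and your justification for it fails.

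The repair is the argument you nearly give when constructing the associator. Each side of the pentagon is a comparison 1-cell between two $(\mathcal{E},\mathcal{M})$-factorizations of the same arrow $A \sqcup D \to (\text{iterated pushout})$; the bipullback axiom in the definition of an orthogonal factorization system in a bicategory forces any two such comparisons to be isomorphic via a unique invertible 2-cell, hence to represent the same isomorphism class of opspan maps. That is where the coherence comes from --- from the factorization system, not from codiscreteness. Codiscreteness is what lets you take isomorphism classes of 2-cells coherently in the first place; it does not collapse the 2-cells themselves.
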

\begin{proof} See \cite[4.20]{carbonijohnsonstreetveritymodulated}.
\end{proof}

Here is how this works in our main example.

\begin{lem} $\V$-$\Cat$ has an orthogonal factorization system whose left class consists of the essentially surjective $\V$-functors and whose right class consists of the $\V$-fully faithful functors that is generated by the two-sided codiscrete cofibrations. 
\end{lem}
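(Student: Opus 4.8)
The plan is to build the factorization system directly and then match its right class against the two-sided codiscrete cofibrations. First I would construct the factorization via the \emph{full image}: given a $\V$-functor $F\colon \A \ra \B$, let $\E$ be the $\V$-category with $\ob\E = \ob\A$ and hom-objects $\E(a,a') := \B(Fa, Fa')$, so that $\A \ra \E$ is the identity on objects (hence essentially surjective) while $\E \ra \B$, $a \mapsto Fa$, is $\V$-fully faithful. To see this is a bicategorical orthogonal factorization system in the sense of axioms (i)--(ii), I would check that for $e\colon \A \ra \A'$ essentially surjective and $m\colon \C \ra \D$ fully faithful the square of hom-categories is a bipullback; this reduces to the standard facts that restriction along an essentially surjective $\V$-functor is fully faithful on $\V$-functor categories, and that a fully faithful $m$ lets one transport the extension data across $m$ up to the coherence isomorphisms. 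The essentially surjective \emph{and} fully faithful functors are exactly the equivalences, i.e. the intersection of the two classes, as an orthogonal factorization system requires.

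Next I would show the right class is \emph{contained} in the class of 1-cells right-orthogonal to every two-sided codiscrete cofibration. The key computation is to identify maps out of a collage: if $\E$ is the collage of $F\colon \B^\op \otimes \A \ra \V$, then a $\V$-functor $\E \ra \C$ is precisely a pair of $\V$-functors $G\colon \A \ra \C$, $H\colon \B \ra \C$ together with a $\V$-natural family $F(b,a) \ra \C(Hb, Ga)$, i.e. a module map $F \Ra \C(H-,G-)$, and restriction along $\A \sqcup \B \ra \E$ simply forgets this module map. Given $m\colon \C \ra \D$ fully faithful, the induced comparison $\C(H-,G-) \ra \D(mH-, mG-)$ is an isomorphism of profunctors, so module maps out of $F$ into $\C$ and into $\D$ correspond bijectively over a fixed pair $(G,H)$. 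This is exactly the bipullback condition (ii) for $e = (\A\sqcup\B \ra \E)$, so every fully faithful functor lies in the generated right class.

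For the reverse containment I would exhibit enough generators to \emph{detect} full faithfulness. Taking $\A = \B = \mathbb{1}$ and the profunctor $\mathbb{1}^\op \otimes \mathbb{1} \ra \V$ picking out an object $V \in \V$, its collage is the two-object $\V$-category $\2_V$ whose single nontrivial hom-object equals $V$; by the quoted theorem this is a two-sided codiscrete cofibration. Orthogonality of $m\colon \C \ra \D$ to $\mathbb{1}\sqcup\mathbb{1} \ra \2_V$ unwinds, via the description above, to the statement that $\V(V, \C(c,c')) \ra \V(V, \D(mc, mc'))$ is a bijection for all objects $c, c'$. Asking this for every $V \in \V$ and invoking the Yoneda lemma forces $\C(c,c') \ra \D(mc,mc')$ to be an isomorphism in $\V$, i.e. $m$ is fully faithful. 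Since the $\2_V$ are among the codiscrete cofibrations, any 1-cell in the generated right class is orthogonal to them, hence fully faithful. Combining the two containments identifies the generated right class with the $\V$-fully faithful functors, which completes the proof.

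I expect the main obstacle to be the first step, namely verifying the bipullback orthogonality axiom rigorously in the bicategorical setting and checking that the coherence isomorphisms extend uniquely, together with pinning down the precise $2$-categorical description of the hom-category of $\V$-functors out of a collage $\E$ into $\C$; both the easy and the hard containments in the second and third steps rest on that description, so I would isolate it as a preliminary lemma.
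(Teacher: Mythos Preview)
Your proposal is correct and follows essentially the same approach as the paper: both leave the construction of the factorization system as routine, show the easy containment via the description of maps out of a collage, and detect full faithfulness using the two-object collages $\2_V$. The only difference is cosmetic: the paper fixes $V = \mathcal{D}(Fc,Fc')$ and reads off full faithfulness from a single lifting problem, whereas you quantify over all $V$ and invoke Yoneda --- your version is slightly more explicit, but the idea is identical.
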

\begin{proof}
We leave it to the reader to prove that this orthogonal factorization exists; we show that it is generated by the two-sided codiscrete cofibrations. The collages are surjective on objects, so $\V$-fully faithful functors are necessarily right orthogonal to them. It remains to show that any $\V$-functor $F \colon \C \ra \mathcal{D}$ right orthogonal to the collages $\A \sqcup \B \ra \E$ is necessarily $\V$-fully faithful. Let $\mathcal{I}$ denote the $\V$-category with one-object and the unit as its hom-object. A $\V$-profunctor from $\mathcal{I}$ to itself is specified by a single object in $\V$. Given $c,c' \in \C$, form the collage of the $\V$-profunctor $\mathcal{I}^\op \otimes \mathcal{I} \ra \V$ determined by $\mathcal{D}(Fc,Fc')$. This collage has the form $\mathcal{I} \sqcup \mathcal{I} \ra \E$, where $\E$ has two objects 0,1 and one non-trivial hom $\E(0,1) = \mathcal{D}(Fc,Fc')$. The lifting problem whose bottom edge is the $\V$-functor that maps surjectively onto the hom-object $\mathcal{D}(Fc,Fc')$ \[\xymatrix{ \mathcal{I} \sqcup \mathcal{I} \ar[r]^- {c \sqcup c'} \ar[d] & \C \ar[d]^F \\ \E \ar[r]  \ar@{-->}[ur] & \mathcal{D} }\] must have a unique solution, which shows that $F$ is $\V$-fully faithful.
\end{proof}

It remains to check that $\V$-fully faithful functors are stable under pushout and cotensors with $\2$; we leave this to the reader. Putting these results together, we obtain the following corollary.

\begin{cor} There is a bicategory whose objects are $\V$-categories, whose 1-cells are two-sided codiscrete cofibrations in $\V$-$\Cat$, and whose 2-cells are isomorphism classes of maps of opspans.
\end{cor}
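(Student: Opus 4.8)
The plan is to realize this Corollary as a direct application of the preceding theorem on the existence of $\DCof(\K)$, specialized to $\K = \V$-$\Cat$ regarded as a bicategory. That theorem requires three hypotheses: that $\K$ have finite limits and colimits, that the two-sided codiscrete cofibrations $A \sqcup B \ra E$ generate an orthogonal factorization system, and that the right class of this system be closed under pushout and cotensor with $\2$. My strategy is to verify each hypothesis in turn and then simply invoke the theorem, whose output bicategory $\DCof(\V\text{-}\Cat)$ has exactly the objects, 1-cells, and 2-cells (isomorphism classes of maps of opspans) described in the statement.

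For the first hypothesis, I would recall that because $(\V, \otimes, I)$ is complete, cocomplete, and closed symmetric monoidal, the 2-category $\V$-$\Cat$ inherits all small limits and colimits. Limits are computed on underlying object-sets and hom-objects in the expected way, while colimits --- though more delicate, since they require freely generating the requisite composites --- nonetheless exist by the standard theory of enriched categories, for which a citation suffices. In particular $\V$-$\Cat$ possesses the finite limits and colimits, including the pushouts and the cotensor $(-)^\2$, that the theorem demands.

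The second hypothesis is precisely the content of the preceding Lemma, which identifies the orthogonal factorization system generated by the codiscrete cofibrations as the one whose left class is the essentially surjective $\V$-functors and whose right class is the $\V$-fully faithful functors. For the third hypothesis I must show that this right class is closed under pushout and under cotensor with $\2$. The cotensor case is straightforward: being $\V$-fully faithful is a condition on hom-objects, and the arrow $\V$-category $\E^\2$ is built hom-objectwise from $\E$, so the cotensor of a $\V$-fully faithful functor remains $\V$-fully faithful.

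The main obstacle is the stability of $\V$-fully faithful functors under pushout, which is the step left to the reader in the surrounding text. Unlike limits, pushouts in $\V$-$\Cat$ do not act simply on hom-objects, so the conclusion cannot be read off formulaically; instead I would analyze the pushout of a $\V$-fully faithful functor along an arbitrary $\V$-functor and check directly that the induced comparison maps on the relevant hom-objects remain isomorphisms, taking care that no spurious morphisms are created between the glued objects. Once these three hypotheses are secured, the theorem applies verbatim and produces the asserted bicategory, completing the proof.
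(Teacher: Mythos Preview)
Your proposal is correct and follows precisely the paper's own approach: the Corollary is obtained by specializing the preceding Theorem to $\K=\V$-$\Cat$, invoking the Lemma for the factorization-system hypothesis, and appealing to the (reader-exercise) stability of $\V$-fully faithful functors under pushout and cotensor with~$\2$. Your additional remarks on completeness and cocompleteness of $\V$-$\Cat$ and on the cotensor case merely flesh out details the paper leaves implicit, but the strategy is the same.
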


This bicategory can be used to ``equip'' the 2-category of $\V$-categories with a bicategory of $\V$-profunctors \cite{streetfibrationsbicategories}.

\subsection{A final note on modeling profunctors in $\Cat$}

We now have two models for profunctors in $\Cat$, the two-sided discrete fibrations and the two-sided codiscrete fibrations. It turns out there is a formal reason that these are the same. 

In any 2-category $\K$ with comma and cocomma objects, there is an adjunction 
\[ \xymatrix{ \mathrm{cocomma} \colon \Span(\K)(A,B) \ar@<1ex>[r] \ar@{}[r]|-{\perp} & \Opspan(\K)(A,B) \colon \mathrm{comma} \ar@<1ex>[l]}\] We've seen above that comma objects are always two-sided discrete fibrations; dually, cocomma objects are always two-sided codiscrete cofibrations. In $\Cat$, this adjunction is \emph{idempotent} in the sense that the comma object of the cocomma object of a comma object is isomorphic to the original comma object; this is equivalent to the dual statement. Any such adjunction restricts to an adjoint equivalence between the full subcategories in the image of each functor, which are consequently reflective and coreflective subcategories of the originals. So this adjunction restricts to an equivalence between the reflective subcategory of two-sided discrete fibrations and the coreflective subcategory of two-sided codiscrete cofibrations. Hence, both of these are equivalent to the 2-category of profunctors from $A$ to $B$.  
\bibliography{biblio}{}
\bibliographystyle{amsalpha}
\hrulefill 
\end{document}